\newcommand{\stsets}[1]{\mathbb{#1}}
\newcommand{\R}{\stsets{R}}
\newcommand{\M}{\stsets{M}}
\newcommand{\Z}{\stsets{Z}}
\renewcommand{\SS}{\stsets{S}}
\newcommand{\eqref}[1]{(\ref{#1})}
\newtheorem{Th}[definition]{Theorem}
\newtheorem{Cor}[definition]{Corollary}
\renewcommand{\P}{\mathbf{P}}
\newcommand{\Q}{\mathbf{Q}}
\newcommand{\E}{\mathbf{E}}
\newcommand{\supp}{\operatorname{supp}}
\newcommand{\ind}{\mathbh{1}}
\newcommand{\one}{{\mathbf{1}}}
\newcommand{\cond}{\mid}
\newcommand{\thru}{,\ldots,}
\newcommand{\bydef}{\stackrel{\mathrm{def}}{=}}
\renewcommand{\epsilon}{\varepsilon}
\renewcommand{\phi}{\varphi}
\newcommand{\ti}{\to\infty}
\newcommand{\seg}{see, e.g., }
\newcommand{\cf}{cf. }
\newcommand{\iid}{i.i.d. }
\newcommand{\as}{a.s. }
\newcommand{\pgf}{p.g.f. }
\newcommand{\pgfl}{p.g.fl. }
\newcommand{\sB}{\mathcal{B}}
\newcommand{\sN}{\mathcal{N}}
\newcommand{\sP}{\mathcal{P}}
\newcommand{\sas}{\mathrm{St\alpha S}}
\newcommand{\das}{\mathrm{D\alpha S}}
\newcommand{\deq}{\stackrel{\mathcal{D}}{=}}
\newcommand{\eps}{\varepsilon}
\newcommand{\neutral}{\mathbf{e}}
\newcommand{\cz}{\mathrm{BM}(X)}
\newcommand{\Bin}{\mathsf{Bin}}
\newcommand{\Sib}{\mathsf{Sib}}
\begin{document}
\begin{frontmatter}

\title{Stability for random measures, point processes and discrete semigroups}
\runtitle{Stability for random measures, point processes and discrete
semigroups}

\begin{aug}
\author[a]{\fnms{Youri} \snm{Davydov}\thanksref{a}\ead[label=e1]{youri.davydov@univ-lille1.fr}},
\author[b]{\fnms{Ilya} \snm{Molchanov}\thanksref{b}\ead[label=e2]{ilya@stat.unibe.ch}}
\and
\author[c]{\fnms{Sergei} \snm{Zuyev}\corref{}\thanksref{c}\ead[label=e3]{sergei.zuyev@chalmers.se}}

\runauthor{Y. Davydov, I. Molchanov and S. Zuyev}
\address[a]{Laboratoire de Paule Painlev\'e, UFR de Math\'ematiques,
Universit\'e Lille~1, Villeneuve d'Ascq Cedex, France. \printead{e1}}

\address[b]{Department of Mathematical
Statistics and Actuarial Science, University of Berne, Sidlerstrasse 5,
\mbox{CH-3012} Berne, Switzerland. \printead{e2}}

\address[c]{Department of Mathematical Sciences, Chalmers University
of Technology, 412 96 Gothenburg, Sweden. \printead{e3}}
\end{aug}

\received{\smonth{10} \syear{2009}}
\revised{\smonth{3} \syear{2010}}

%
\begin{abstract}
Discrete stability extends the classical notion of stability to
random elements in discrete spaces by defining a scaling operation
in a randomised way: an integer is transformed into the
corresponding binomial distribution.
Similarly defining the scaling operation as thinning of counting
measures we characterise the corresponding discrete stability
property of point processes. It is shown that these processes are
exactly Cox (doubly stochastic Poisson) processes with
strictly stable random intensity measures. We give
spectral and LePage representations for general strictly stable
random measures without assuming their independent scattering. As a
consequence, spectral representations are obtained for the probability
generating functional and void probabilities of
discrete stable processes. An alternative cluster representation for
such processes is also derived using the so-called Sibuya
point processes, which constitute a new family of purely random point
processes. The obtained results are then applied to explore stable
random elements in discrete semigroups, where the scaling is defined
by means of thinning of a~point process on the basis of the
semigroup. Particular examples include discrete stable vectors that
generalise discrete stable random variables and the family of
natural numbers with the multiplication operation, where the primes
form the basis.
\end{abstract}

%
\begin{keyword}
\kwd{cluster process}
\kwd{Cox process}
\kwd{discrete semigroup}
\kwd{discrete stability}
\kwd{random measure}
\kwd{Sibuya distribution}
\kwd{spectral measure}
\kwd{strict stability}
\kwd{thinning}
\end{keyword}

\end{frontmatter}

\section{Introduction}
\label{sec:introduction}

Stability for random variables was introduced by Paul
L\'evy and thereafter became one of the key concepts in probability. A
random vector $\xi$ (or its probability law) is called \textit{strictly
$\alpha$-stable} (notation: $\sas$) if for any positive numbers $a$
and $b$ the following identity is satisfied:
\[
a^{1/\alpha}\xi'+b^{1/\alpha}\xi''\deq(a+b)^{1/\alpha}\xi ,
\]
where $\xi', \xi''$ are independent vectors distributed as $\xi$ and
$\deq$ denotes equality in distribution. Non-trivial distributions
satisfying this exist only for $\alpha\in(0,2]$. Using $t=a/(a+b)$ in
the above
definition, the stability property can be equivalently expressed as
%
\begin{equation}
\label{eq:defsas}
t^{1/\alpha}\xi'+(1-t)^{1/\alpha}\xi''\deq\xi
\end{equation}
for any $t\in[0,1]$.

The recent work Davydov, Molchanov and Zuyev \cite{davmolzuy08} explored the notion of strict
stability for a much more general situation of random elements taking
values in a commutative semigroup $X$. The stability property relies on
two basic operations defined on $X$: the semigroup operation, addition,
and the rescaling that plays a role of multiplication by
numbers. Many properties of the classical stability still hold for
this general situation, but there are many notable differences related
to specific algebraic properties of $X$; for instance, the validity of
distributivity laws, relation between the zero and the neutral
elements, compactness, etc. Most important, the stable random
elements arise as a sum of points of a Poisson process whose intensity
measure has a specific scalable form. In the classical case of
$d$-dimensional vectors, this result turns into the \textit{LePage
representation} of strictly $\alpha$-stable laws: any such law
corresponds to the distribution of the sum of points of a Poisson
process $\Pi_\alpha$ in $\R^d$ with the density (intensity) function
having a~pro\-duct form $\theta_\alpha(\mathrm{d}\rho)\sigma(\mathrm{d}s)$ in the polar
coordinates $(\rho,s)$. Here $\sigma$ is any finite measure on the
unit sphere $\SS^{d-1}$, called the \textit{spectral measure}, and
$\theta_\alpha((r,\infty))=r^{-\alpha}$. The underlying reason is that
the process $\Pi_\alpha$ is itself stable with respect to scaling and
superposition, that is,
%
\begin{equation}
\label{eq:piscale}
t^{1/\alpha}\Pi_\alpha'+(1-t)^{1/\alpha}\Pi_\alpha''\deq
\Pi_\alpha ;
\end{equation}
this property being granted by the measure $\theta_\alpha$ that scales
in a `right' way and the superposition property of Poisson processes.
This fundamental stability property leads to a characterisation of
strictly stable laws in very general Abelian semigroups forming a cone
with respect to rescaling with a continuous argument $t\geq0$;
see~\cite{davmolzuy08}.

The case of discrete spaces, however, cannot be treated the same way
since the scaling by a continuous argument cannot be defined in such
spaces. While infinite divisibility of random elements in general
semigroups, at least in the commutative case, is well understood (see
\cite{rus88a,rus88b}), a systematic exploration of stable laws on
possibly discrete semigroups is not available. This prompts us to define
rescaling as a direct transformation of probability distributions
rather than being inherited from rescaling of the underlying phase
space of the random elements.

Prior to the current work, the family $\Z_+$ of non-negative integers
with addition was the only discrete semigroup for which stability was
defined. The discrete stability concept for non-negative integer-valued
random variables was introduced by
\cite{stehar79}, who defined the result $t\circ n$ of
rescaling of $n\in\Z_+$ by $t\in[0,1]$ to be the binomial probability
measure $\Bin(n,t)$ with the convention that $0\circ n=0$ for any $n$.
Since $\Bin(n,t)$ corresponds to the sum of $n$ independent Bernoulli
random variables $\Bin(1,t)$ with parameter $t$, one can view $t\circ
n$ as the total count of positive integers between $1$ and $n$ where
each number is counted with probability $t$ independently of others.
Thus a $\Z_+$-valued random variable~$\xi$
is mapped to a random variable $t\circ\xi$ having distribution of
$\sum_{n=0}^\xi\beta_n$, where $\{\beta_n\}$ is a~sequence of
independent $\Bin(1,t)$ random variables. Thus $t\circ\xi$ can also
be viewed as a~doubly stochastic random variable whose distribution
depends on realisations of $\xi$. This multiplication operation, though
in a different context, goes back to~\cite{Renyi57}.

In terms of the probability generating function (p.g.f.), if
$g_\xi(s)=\E s^\xi$ denotes the \pgf of $\xi$, then the \pgf of
$t\circ\xi$ is given by the composition of $g_\xi$ and the \pgf of the
Bernoulli $\Bin(1,t)$ law:
%
\begin{equation}
\label{eq:tphi}
g_{t\circ\xi}(s)=g_\xi\bigl(1-t(1-s)\bigr) .
\end{equation}

Similarly to \eqref{eq:defsas}, a random variable $\xi$ is called
\textit{discrete $\alpha$-stable} if for all $0\leq t\leq1$ the
following identity is satisfied
%
\begin{equation}
\label{eq:dstabdef}
t^{1/\alpha}\circ\xi'+(1-t)^{1/\alpha}\circ\xi''\deq
\xi ,
\end{equation}
where $\xi',\xi''$ are independent copies of $\xi$. The full
characterisation of discrete
$\alpha$-stable random variables (denoted by $\das$ in the sequel) is
provided in \cite{stehar79} namely the laws
satisfying (\ref{eq:dstabdef}) exist only for $\alpha\in(0,1]$ and
each such law has a \pgf of the form
%
\begin{equation}
\label{eq:lawdas}
\E s^\xi=\exp\{-c(1-s)^\alpha\}
\end{equation}
for some $c>0$.

It has also been shown that the above multiplication, defined using the
Binomial distribution, can be embedded in the family of multiplication
operations that correspond to branching processes or, alternatively,
to semigroups of probability generating functions; see
\cite{harsteutver82}. A variant of this operation for
integer-valued vectors with multiplication defined by operator-scaling
of the probability generating function was studied by
Van Harn and Steutel \cite{harsteut86} in view of queueing applications. This setting
has been extended to some functional equations for discrete random
variables in \cite{harsteut93,pak95}.

In this paper we show that the discrete stability of non-negative
integer random variables is a particular case of stability with
respect to the thinning operation of point processes defined on a
rather general phase space. If the phase space consists of one
element, a point process may have multiple points -- the total number
of which
is a non-negative integer random variable -- and the thinning of the
points is equivalent to the above-defined scaling operation $\circ$ on
discrete random variables. All known properties of discrete stable
laws have their more general counterparts in point process settings.
In particular, we show in Section \ref{sec:stab-with-resp} that
$\alpha$-stable point processes with respect to thinning are, in fact,
Cox processes (\seg Chapter 6.2 in \cite{DalVJon02}) with the
(random) parametric measure being a positive $\alpha$-stable measure
with $\alpha\in(0,1]$ on the phase space. Because of this, we
first address general strictly stable random measures in
Section \ref{sec:strictly-stable-rand}. Note in this relation that, so
far, only \textit{independently scattered} stable measures have received
much attention in the literature; see \cite{SamTaq94} and, more
recently, \cite{davegor00,hell09} on the subject. It
should be noted that Cox processes driven by various random measures
are often used in spatial statistics (see
\cite{helprokved08,mol03,molsyvwaa98}). A particularly novel feature
of discrete stable processes is that the point counts have discrete
$\alpha$-stable distributions, which have infinite expectations unless
for the degenerate case of $\alpha=1$. Thus, these point processes
open new possibilities for modelling point patterns with
non-integrable point counts and non-existing moment measures of all
orders.

It is known that discrete stable random variables can be represented
as the sum of a Poisson number of Sibuya distributed integer random
variables; see \cite{Dev92}. In Section \ref{sec:poiss-mixt-repr} we
show that an analogous cluster representation of finite discrete
stable point processes also holds. The clusters are Sibuya point
processes, which seem to be a new class of point processes not
considered so far.

Some of our results for general point processes, especially defined on
non-compact spaces, become trivial or do not have counterparts for discrete
random variables. This concerns the results of
Section \ref{sec:infinite-das-point}, which draws an analogy with
infinitely divisible point processes.

Another important model arising from the point process setting
is the case of random elements in discrete semigroups that possess an at
most countable basis. We show in Section \ref{sec:settings} that
the uniqueness of representation of each element as a linear finite
combination with natural coefficients of the basis elements is a
necessary condition to be able to define discrete stability. We give
characterisation of $\das$ elements in these semigroups and establish
a discrete analogue of their LePage representation.

The presentation of the theoretical material is complemented by
examples from random measures, point processes and discrete semigroups
all over the text. In particular, in Section \ref{sec:settings} we
introduce a new concept of multiplicatively stable natural numbers
that is interesting on its own right and
characterise their distributions.

\section{Strictly stable random measures}
\label{sec:strictly-stable-rand}

Let $X$ be a locally compact second countable space with the Borel
$\sigma$-algebra $\sB$. The family of Radon measures on $\sB$ is
denoted by $\M$. These are measures that have finite values on the
family $\sB_0$ of relatively compact Borel sets, that is, the sets with
compact topological closure. Note that all measures considered in
this paper are assumed to be non-negative. The zero measure is denoted
by $0$.

A \textit{random measure} is a random element in the measurable space
$[\M,\mathcal{M}]$, where the $\sigma$-algebra $\mathcal{M}$ is
generated by the following system of sets:
\[
\{\mu\in\M\dvtx \mu(B_i)\leq t_i,  i=1\thru n\},\qquad
B_i\in\sB, t_i\geq0 .
\]
The distribution of a random measure $\zeta$ can be characterised by
its finite-dimensional distributions, that is, the distributions of
$(\zeta(B_1)\thru\zeta(B_n))$ for each finite collection of disjoint
sets $B_1\thru B_n\in\sB_0$.

It is well known (see, e.g., \cite{DalVJon08}, Chapter 9.4) that the distribution of
$\zeta$ is uniquely determined by its \textit{Laplace functional}
%
\begin{equation}
\label{eq:lap}
L_\zeta[h]=\E\exp \biggl\{ - \int_X h(x) \zeta(\mathrm{d}x) \biggr\}
\end{equation}
defined on non-negative bounded measurable functions $h:\ X\mapsto
\R_+$ with compact support (denoted by $h\in\cz$). From now on we
adapt a
shorter notation
\[
\langle h,\mu\rangle=\int_X h(x) \mu(\mathrm{d}x) ,\qquad  \mu\in\M .
\]
%

The family $\M$ can be endowed with the operation of addition and
multiplication by non-negative numbers as
%
\begin{eqnarray}
(\mu_1+\mu_2)(\cdot)&=&\mu_1(\cdot)+\mu_2(\cdot) ; \label{eq:sumop}\\[3pt]
(t\mu)(\cdot)&=&t\mu(\cdot) , \qquad  t\geq0 . \label{eq:multop}
\end{eqnarray}
If $\mu$ is a finite non-negative measure, it is possible to normalise
it by
dividing it by its total mass and so arriving at a probability measure.
The normalisation procedure can be extended to all locally finite measures
as follows. Let $B_1,B_2,\ldots$ be a fixed countable base of the
topology on $X$ that consists of relatively compact sets. Append
$B_0=X$ to this base. For each $\mu\in\M\setminus\{0\}$ consider
the sequence of
its values $(\mu(B_0),\mu(B_1),\mu(B_2),\ldots),$ possibly starting
with infinity, but otherwise finite. Let $i(\mu)$ be the smallest
non-negative integer $i$ for which $0<\mu(B_i)<\infty$; in particular,
$i(\mu)=0$ if $\mu$ is a finite measure. The set\looseness=1
%
\begin{equation}
\label{eq:s-def}
\SS=\bigl\{\mu\in\M\dvtx  \mu\bigl(B_{i(\mu)}\bigr)=1\bigr\}
\end{equation}
is measurable, since
\[
\SS=\M_1\cup\bigcup_{n=1}^\infty\{\mu\in\M\dvtx  \mu(B_0)=\infty
, \mu(B_1)
=\cdots=\mu(B_{n-1})=0, \mu(B_n)=1\} ,
\]
where $\M_1$ is the family of all probability measures on $X$. Note
that $\SS\cap\{\mu\dvtx \mu(X)<\infty\}=\M_1$. Furthermore, every
$\mu\in\M\setminus\{0\}$ can be uniquely associated with the pair
$(\hat{\mu},\mu(B_{i(\mu)}))\in\SS\times\R_+$, so that
$\mu=\mu(B_{i(\mu)})\hat{\mu}$. It is straightforward to check that
the mapping $\mu\mapsto(\mu(B_{i(\mu)}),\hat{\mu})$ is measurable.
Hence we have the following \textit{polar decomposition}: $\M=\SS
\times
\R_+$.

\begin{definition}
\label{def:stm}
A random measure $\zeta$ is called \textit{strictly}
$\alpha$-\textit{stable} (notation $\sas$) if
%
\begin{equation}
\label{eq:stabmes}
t^{1/\alpha}\zeta'+(1-t)^{1/\alpha}\zeta''\deq\zeta
\end{equation}
for all $0\leq t \leq1$, where $\zeta',\zeta''$ are
independent copies of the random measure $\zeta$.
\end{definition}

Definition \ref{def:stm} yields that for any $B_1\thru B_n\in\sB_0$,
the vector $(\zeta(B_1)\thru\zeta(B_n))$ is a non-negative $\sas$
$n$-dimensional random vector implying that $\alpha\in(0,1]$. It is
well known that one-sided -- that is, concentrated on $\R_+$ --
strictly stable laws corresponding to
$\alpha=1$ are degenerated so that $\sas$ measures with $\alpha=1$
are deterministic; see, for example,  \cite{SamTaq94}.

\begin{Th}
\label{thr:gen-stas}
A locally finite random measure $\zeta$ is $\sas$ if and only if
$\zeta$ is deterministic in the case $\alpha=1$ and in the case
$\alpha\in(0,1)$ if and only if its Laplace functional is given by
%
\begin{equation}
\label{eq:lf}
L_\zeta[h]=\exp \biggl\{-\int_{\M\setminus\{0\}}
\bigl(1-\mathrm{e}^{-\langle h,\mu\rangle}\bigr) \Lambda(\mathrm{d}\mu) \biggr\} ,\qquad
  h\in\cz ,\vspace*{2pt}
\end{equation}
where $\Lambda$ is a L\'evy measure, that is, a Radon measure on
$\M\setminus\{0\}$ such that
%
\begin{equation}
\label{eq:lmreg}
\int_{\M\setminus\{0\}}\bigl(1-\mathrm{e}^{-\langle h,\mu\rangle}\bigr)
\Lambda(\mathrm{d}\mu)<\infty\vspace*{-2pt}
\end{equation}
for all $h\in\cz$, and $\Lambda$ is homogeneous of order $-\alpha$,
that is,
$\Lambda(tA)=t^{-\alpha}\Lambda(A)$ for all measurable
$A\subset\M\setminus\{0\}$ and $t>0$.
\end{Th}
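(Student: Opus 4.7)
The plan is to treat the two directions of the equivalence separately, after disposing of the degenerate case $\alpha=1$. For $\alpha=1$, fix any $B\in\sB_0$ and observe that $\zeta(B)$ is a non-negative strictly $1$-stable random variable; since such variables are well known to be degenerate (their Laplace transform is $e^{-cs}$), every finite-dimensional distribution of $\zeta$ is concentrated at a point, hence $\zeta$ itself is a fixed locally finite measure.

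For sufficiency when $\alpha\in(0,1)$, I would realise $\zeta$ as $\zeta=\sum_i\mu_i$, where $\{\mu_i\}$ is a Poisson process on $\M\setminus\{0\}$ with intensity $\Lambda$. The integrability condition \eqref{eq:lmreg} guarantees that $\langle h,\zeta\rangle<\infty$ almost surely for $h\in\cz$, and the standard Laplace-functional formula for Poisson sums yields \eqref{eq:lf}. To verify stability, take independent copies $\zeta',\zeta''$ built from independent Poisson processes $\Pi',\Pi''$; then $t^{1/\alpha}\zeta'+(1-t)^{1/\alpha}\zeta''$ is the integral of the superposition of the rescaled processes $t^{1/\alpha}\Pi'$ and $(1-t)^{1/\alpha}\Pi''$, whose intensities are, by the homogeneity of $\Lambda$, the measures $t\Lambda$ and $(1-t)\Lambda$; their sum is again $\Lambda$, so the resulting random measure is distributed as $\zeta$.

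For necessity, iterating \eqref{eq:stabmes} first with $t_1=c$ and then splitting each summand gives $\zeta\deq\sum_{i=1}^n t_i^{1/\alpha}\zeta_i$ for any $t_1,\dotsc,t_n\ge 0$ with $\sum t_i=1$. With $t_i=1/n$ this becomes $\zeta\deq n^{-1/\alpha}(\zeta_1+\dotsb+\zeta_n)$, so $\zeta$ is infinitely divisible as a non-negative random measure. The classical Lévy–Khintchine representation for such measures (see \cite{DalVJon:88}) yields
\begin{displaymath}
  K[h]:=-\log L_\zeta[h]=\langle h,\nu\rangle+\Psi[h],\qquad
  \Psi[h]:=\int_{\M\setminus\{0\}}(1-e^{-\langle h,\mu\rangle})\,\Lambda(d\mu),
\end{displaymath}
for a fixed Radon measure $\nu$ and a Lévy measure $\Lambda$. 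Iterated stability gives $K[n^{1/\alpha}h]=nK[h]$ and $K[n^{-1/\alpha}h]=n^{-1}K[h]$, hence $K[rh]=r^\alpha K[h]$ on a dense set of $r>0$; since $K[sh]$ is monotone in $s\ge 0$, this extends to all $s>0$.

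Substituting the explicit form of $K$ into $K[sh]=s^\alpha K[h]$ gives $(s-s^\alpha)\langle h,\nu\rangle=s^\alpha\Psi[h]-\Psi[sh]$. Since $\Psi[sh]\ge 0$, dividing by $s-s^\alpha$ and letting $s\to\infty$ (using $\alpha<1$) forces $\langle h,\nu\rangle=0$ for every $h\in\cz$, hence $\nu=0$. It follows that $\Psi[sh]=s^\alpha\Psi[h]$; writing this identity as
\begin{displaymath}
  \int(1-e^{-\langle h,\mu\rangle})\,d((\phi_s)_*\Lambda)(\mu)=s^\alpha\int(1-e^{-\langle h,\mu\rangle})\,d\Lambda(\mu),
\end{displaymath}
with $\phi_s(\mu)=s\mu$, and invoking uniqueness of the Lévy measure associated with a given Laplace functional, I would conclude $(\phi_s)_*\Lambda=s^\alpha\Lambda$, that is, $\Lambda(tA)=t^{-\alpha}\Lambda(A)$ for every $t>0$ and measurable $A\subset\M\setminus\{0\}$. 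The principal technical obstacle is carefully invoking the infinitely divisible representation together with its uniqueness in the setting of Radon random measures on a locally compact second countable space; once those classical tools are imported, the remaining manipulations are essentially algebraic.
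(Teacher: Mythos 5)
Your argument is correct, and it reaches the conclusion by a genuinely different route than the paper. For sufficiency, the paper verifies directly that \eqref{eq:lf} is a valid Laplace functional: it checks that the finite-dimensional images of $\Lambda$ produce totally skewed stable vectors and then invokes a continuity theorem of Daley--Vere-Jones; you instead construct $\zeta$ explicitly as a Poisson sum $\sum_i\mu_i$ with intensity $\Lambda$ and verify stability via the scaling/superposition property of Poisson processes --- this is essentially the content of the paper's later Theorem~\ref{th:lp} (LePage representation), imported up front, and it requires the small but genuine check (which \eqref{eq:lmreg} supplies, since $1-e^{-x}$ is comparable to $\min(x,1)$) that the Poisson sum is a.s.\ a Radon measure. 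For necessity, the paper routes everything through the abstract theory of stable laws on convex cones from \cite{dav:mol:zuy08} (conditions \textbf{(C)} and \textbf{(E)}, the second dual semigroup) and then needs a separate vague-compactness argument to pull the integration domain back to $\M\setminus\{0\}$; you instead observe infinite divisibility, invoke the classical L\'evy--Khintchine representation for infinitely divisible random measures from \cite{DalVJon:88} --- which already delivers a L\'evy measure living on $\M\setminus\{0\}$ plus a deterministic drift $\nu$ --- and then use the scaling identity $K[sh]=s^\alpha K[h]$ (established on a dense set of $s$ and extended by monotonicity) to kill $\nu$ and to deduce homogeneity of $\Lambda$ from uniqueness of the representation. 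Your route is more elementary and self-contained, trading the semigroup machinery and the compactness step for a single citation of the infinite-divisibility theorem and its uniqueness clause; the paper's route is the one that generalises to semigroups where no such classical representation is available. All the individual steps you sketch (the limit $s\to\infty$ forcing $\langle h,\nu\rangle=0$, the push-forward identity $(\phi_s)_*\Lambda=s^\alpha\Lambda$) check out.
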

\begin{pf}
\textit{Sufficiency}. It is obvious that each deterministic measure
is $\sas$ with \mbox{$\alpha=1$}. Consider now $h_1,\dots,h_k\in\cz$ and
the image $\tilde{\Lambda}$ of $\Lambda$ under the map
$\mu\mapsto(\langle h_1,\mu\rangle,\dots,\langle h_k,\mu\rangle)$.
It is easy to see that $\tilde\Lambda$ is a homogeneous measure on
$\R_+^k$. Then $L_\zeta[\sum_{i=1}^k t_ih_i]$ as a function of
$t_1,\dots,t_k$ is the Laplace transform of a totally skewed to the
right (i.e., almost surely positive) strictly stable random vector
$(\langle h_1,\zeta\rangle,\dots,\langle h_k,\zeta\rangle)$ with the
L\'evy measure $\tilde\Lambda$. If we show that (\ref{eq:lf})
defines a Laplace functional of a random measure this would imply
its stability.

We have $L_\zeta[0]=1$ and $L_\zeta[h_n]\to L_\zeta[h]$ if
$h_n\uparrow h$ pointwise. To see this, $H_n(\mu)=\langle
h_n,\mu\rangle\uparrow\langle h,\mu\rangle=H(\mu)$ by the monotone
convergence theorem. In its turn, $\langle H_n,\Lambda\rangle$
converges to $\langle H,\Lambda\rangle$ again by the monotone
convergence. Now, by an analogue of \cite{DalVJon08}, Theorem~9.4.II, for
the Laplace functional, (\ref{eq:lf}) indeed is the Laplace
functional of a random measure.\looseness=1

\textit{Necessity}. Strictly stable random measures can be treated
by means of the general theory of strictly stable laws on semigroups
developed in \cite{davmolzuy08}. Consider the cone $\M$ of locally
finite measures with the addition and scaling operation defined by
(\ref{eq:sumop}) and (\ref{eq:multop}). In the terminology of
\cite{davmolzuy08}, this set becomes a pointed cone with the origin
and neutral element being the zero measure. The second
distributivity law holds and the gauge function
$\|\mu\|=\mu(B_{i(\mu)})$, $\mu\in\M$, is homogeneous with respect
to multiplication of measures by a~number. As in
\cite{davmolzuy08}, Example 8.6, we argue that the characters on $\M$
\[
\vspace*{2pt}
\chi_h(\mu)=\exp \{-\langle h,\mu\rangle \},\qquad    h\in
\cz ,
\vspace*{2pt}
\]
have a strictly separating countable subfamily and are continuous,
so that condition (C) of \cite{davmolzuy08} holds.
Furthermore, $\chi_h(s\mu)\to1$ as $s\downarrow0$ for all
$\mu\in\M$, so that condition (E) of \cite{davmolzuy08}
is also satisfied. By Davydov, Molchanov and Zuyev \cite{davmolzuy08}, Theorem 5.16,
the log-Laplace transform of a $\sas$ measure satisfies
$L_\zeta[sh]=s^\alpha L_\zeta[h]$ for $s>0$, where $\alpha\in(0,1]$
by Davydov, Molchanov and Zuyev \cite{davmolzuy08}, Theorem 5.20(ii). If $\alpha=1$, this identity
implies that the values of $\zeta$ are deterministic. By
Davydov, Molchanov and Zuyev \cite{davmolzuy08}, Theorems 6.5(ii) and 6.7(i), the log-Laplace
functional of a $\sas$ random measure with $\alpha\in(0,1)$ can be
represented as the integral similar to (\ref{eq:lf}) with the
integration taken with respect to the L\'evy measure of $\zeta$ over
the second dual semigroup, that is,\ the family of all characters acting
on functions from $\cz$.

In order to reduce the integration domain to $\M\setminus\{0\}$, it
suffices to show that the convergence $\langle h,\mu_k\rangle\to
g(h)$ for all $h\in\cz$ implies that $g(h)=\langle h,\mu\rangle$ for
some $\mu\in\M$. Taking $h=\ind_B$ here implies convergence and
hence boundedness of the sequence $\{\mu_k(B),  k\geq1\}$ for each
$B\in\sB_0$. By Daley and Vere-Jones \cite{DalVJon02}, Corollary A2.6.V, the sequence
$\{\mu_k,  k\geq1\}$ is relatively compact in the vague topology.
Given that $\langle h,\mu_k\rangle$ converges for each $h\in\cz$, in
particular for all continuous $h\in\cz$, we obtain that all vaguely
convergent subsequences of $\{\mu_k,  k\geq1\}$ share the same
limit that can be denoted by~$\mu$ and used to represent
$g(h)=\langle h,\mu\rangle$ for continuous $h$. However, convergence
$\mu_n\to\mu$ also happens in the strong local topology
corresponding to
test functions $h$ from whole $\cz$. Indeed, if two subsequences
$\mu_{i_1},\mu_{i_2},\dots$ and $\mu_{j_1},\mu_{j_2},\dots$ have
different limits in this topology, say, $\mu'$ and $\mu''$, then
there is $h\in\cz$ such that $\langle h,\mu'\rangle\neq\langle
h,\mu''\rangle$. Then for the subsequence
$\mu_{i_1},\mu_{j_1},\mu_{i_2},\mu_{j_2},\dots$ (possibly after
removing the repeating members) the limit of the integrals of $h$
does not exist, contradicting the assumption. Now (\ref{eq:lf})
follows from \cite{davmolzuy08}, Theorem 7.7.
\end{pf}

The properties of $\sas$ measure $\zeta$ are determined by its L\'evy
measure $\Lambda$. For instance, if $X=\R^d$, then $\zeta$ is
stationary if and only if $\Lambda$ is invariant with respect to the
map $\mu(\cdot)\mapsto\mu(\cdot+a)$ for all $a\in\R^d$ and all
$\mu$
from the support of $\Lambda$. Note also that
(\ref{eq:lmreg}) is equivalent to
%
\begin{equation}
\label{eq:lm-new}
\int_{\M\setminus\{0\}}\bigl(1-\mathrm{e}^{-\mu(B)}\bigr)\Lambda(\mathrm{d}\mu)<\infty
\end{equation}
for all $B\in\sB_0$.

Because of homogeneity, it is also useful to decompose $\Lambda$ into
the radial and directional components using the polar decomposition of
$\M$
described above. Note that the map $\mu\mapsto
(\hat{\mu},\mu(B_{i(\mu)}))$ is measurable. Introduce a measure
$\widehat{\sigma}$ such that
\[
\widehat{\sigma} (A)=\Lambda(\{t\mu\dvtx  \mu\in A,  t\geq1\})
\]
for all measurable $A\subset\SS$. Then
$\Lambda(A\times[a,b])=\widehat{\sigma}(A)(a^{-\alpha}-b^{-\alpha})$,
so that $\Lambda$ is represented as the product of $\widehat{\sigma}$
and the radial component given by the measure $\theta_\alpha$ defined
as $\theta_\alpha([r,\infty))=r^{-\alpha}$, $r>0$. By the reason which will become apparent in the proof of
Theorem \ref{th:finite} below, it
is more convenient to scale $\widehat{\sigma}$ by the value
$\Gamma(1-\alpha)$ of the gamma function. The measure
$\sigma=\Gamma(1-\alpha)\widehat{\sigma}$ will be called the \textit
{spectral
measure} of $\zeta$ in the sequel. Note that $\sigma$ is not
necessarily finite unless~$X$ is compact.

Condition (\ref{eq:lm-new}) can be reformulated for the spectral
measure $\sigma$ as
%
\begin{equation}
\label{eq:s-fin}
\int_{\SS} \mu(B)^\alpha\sigma(\mathrm{d}\mu)<\infty
\end{equation}
for all $B\in\sB_0$. Note that $\sigma$ can also be defined on any
other reference sphere $\SS'\subset\M$, provided each
$\mu\in\M\setminus\{0\}$ can be uniquely represented as $t\mu'$ for
some $\mu'\in\SS'$.

\begin{Th}
\label{th:finite}
Let $\sigma$ be the spectral measure of a $\sas$ random
measure $\zeta$ with L\'evy measure $\Lambda$. Then
%
\begin{equation}
\label{eq:sd}
L_\zeta[h]=\exp \biggl\{-\int_{\SS}
\langle h,\mu\rangle^\alpha\sigma(\mathrm{d}\mu) \biggr\} ,\qquad
  h\in\cz .
\end{equation}
Furthermore,
\begin{enumerate}[(ii)]
\item[(i)] $\zeta$ is \as finite if and only
if its L\'evy measure $\Lambda$ (resp.,\ spectral measure
$\sigma$)
is supported by finite measures and $\sigma(\SS)=\sigma(\M_1)$ is
finite.
\item[(ii)] The Laplace functional (\ref{eq:lf}) defines a
non-random measure if and only if $\alpha=1$. In this case
$\zeta=\overline{\mu}(\cdot)=\int_{\SS} \mu(\cdot)\sigma(\mathrm{d}\mu)$.
\end{enumerate}
\end{Th}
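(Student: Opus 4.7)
The plan is to extract (\ref{eq:sd}) from the L\'evy representation (\ref{eq:lf}) of Theorem \ref{thr:gen-stas} by using the polar decomposition $\M=\SSph\times\R_+$ of the cone of measures, and then read off both (i) and (ii) from the resulting explicit formula.

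\textbf{Deriving (\ref{eq:sd}).} The homogeneity $\Lambda(tA)=t^{-\alpha}\Lambda(A)$ together with the splitting $\mu\mapsto(\hat\mu,\mu(B_{i(\mu)}))$ identifies $\Lambda$ on $\M\setminus\{0\}$ with the product $\widehat\sigma\otimes\theta_\alpha$ on $\SSph\times(0,\infty)$. Substituting into (\ref{eq:lf}) and applying Fubini gives
\begin{displaymath}
  -\log L_\zeta[h]
  =\int_\SSph\int_0^\infty\bigl(1-e^{-t\langle h,\mu\rangle}\bigr)\,\theta_\alpha(dt)\,\widehat\sigma(d\mu).
\end{displaymath}
Using $\theta_\alpha(dt)=\alpha t^{-\alpha-1}dt$, the change of variable $u=t\langle h,\mu\rangle$ and an integration by parts yield the classical identity $\int_0^\infty(1-e^{-at})\alpha t^{-\alpha-1}dt=\Gamma(1-\alpha)a^\alpha$ valid for $\alpha\in(0,1)$, so the inner integral equals $\Gamma(1-\alpha)\langle h,\mu\rangle^\alpha$. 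Absorbing $\Gamma(1-\alpha)$ into the normalisation $\sigma=\Gamma(1-\alpha)\widehat\sigma$ produces (\ref{eq:sd}); this also motivates the scaling chosen for the spectral measure.

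\textbf{Part (i).} To detect finiteness of $\zeta(X)$, the idea is to push (\ref{eq:sd}) to $h=t\ind_X$ through admissible test functions. Local compactness and second countability of $X$ supply an increasing sequence $K_n\uparrow X$ of compact sets; then $h_n=t\ind_{K_n}\in\cz$, and monotone/dominated convergence applied to both sides of (\ref{eq:sd}) leads to
\begin{displaymath}
  \E e^{-t\zeta(X)}=\exp\Bigl\{-t^\alpha\int_\SSph\mu(X)^\alpha\,\sigma(d\mu)\Bigr\},\qquad t>0.
\end{displaymath}
Letting $t\downarrow 0$, the left side tends to $\P(\zeta(X)<\infty)$, while the right side tends to $1$ iff $\int_\SSph\mu(X)^\alpha\sigma(d\mu)<\infty$ and is otherwise identically $0$. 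Since $\mu(X)=\infty$ on $\SSph\setminus\M_1$ and $\mu(X)=1$ on $\M_1$, this integral is finite precisely when $\sigma$ concentrates on $\M_1$ with $\sigma(\M_1)<\infty$, which translates at the level of $\Lambda$ into being supported on finite measures.

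\textbf{Part (ii).} For each $B\in\sB_0$, formula (\ref{eq:sd}) says that $\zeta(B)$ has Laplace transform $\exp\{-t^\alpha c_B\}$ with $c_B=\int_\SSph\mu(B)^\alpha\sigma(d\mu)$; for $\alpha\in(0,1)$ this is degenerate only when $c_B=0$, hence a non-zero $\sigma$ always produces some genuinely random $\zeta(B)$. When $\alpha=1$, Theorem \ref{thr:gen-stas} forces $\zeta$ to be deterministic, and (\ref{eq:sd}) then reads $L_\zeta[h]=\exp\{-\langle h,\overline\mu\rangle\}$ with $\overline\mu(\cdot)=\int_\SSph\mu(\cdot)\,\sigma(d\mu)$, exhibiting $\zeta$ as the non-random measure $\overline\mu$.

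The delicate point is the boundary case $\alpha=1$ in Part (ii): the radial integral used to derive (\ref{eq:sd}) diverges there (since $\Gamma(1-\alpha)\to\infty$ as $\alpha\uparrow 1$), so the formula has to be justified independently. The remedy is to combine the determinism granted by Theorem \ref{thr:gen-stas} with the positive homogeneity $-\log L_\zeta[sh]=s(-\log L_\zeta[h])$ and the additivity built into the L\'evy-type representation in the second dual of \cite{dav:mol:zuy08}; together these force $h\mapsto -\log L_\zeta[h]$ to be a continuous linear functional on $\cz$, hence of the form $\langle h,\overline\mu\rangle$ for a unique locally finite $\overline\mu$ agreeing with $\int_\SSph\mu(\cdot)\sigma(d\mu)$.
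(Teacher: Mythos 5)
Your argument is correct and follows essentially the same route as the paper: the polar factorisation $\Lambda=\widehat\sigma\otimes\theta_\alpha$ together with $\int_0^\infty(1-e^{-at})\,\theta_\alpha(dt)=\Gamma(1-\alpha)a^\alpha$ for (\ref{eq:sd}), a monotone limit through $h_n=\ind_{X_n}$ for (i), and the determinism from Theorem~\ref{thr:gen-stas} for (ii). Your two refinements --- letting $t\downarrow0$ in (i) rather than evaluating at $t=1$, and flagging that (\ref{eq:sd}) at $\alpha=1$ needs a separate justification since the radial integral degenerates --- are minor improvements in rigour over the paper's version of the same proof.
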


\begin{pf}
The representation (\ref{eq:sd}) follows from (\ref{eq:lf}), since
\begin{eqnarray*}
\int_{\M\setminus\{0\}}\bigl(1-\mathrm{e}^{-\langle h,\mu\rangle}\bigr) \Lambda(\mathrm{d}\mu)
&=&\int_{\SS} \int_0^\infty\bigl(1-\mathrm{e}^{-t \langle
h,\mu\rangle}\bigr) \theta_\alpha(\mathrm{d}t) \widehat{\sigma}(\mathrm{d}\mu)\\
&=&\Gamma(1-\alpha)\int_{\SS}\langle h,\mu\rangle^\alpha
\widehat{\sigma}(\mathrm{d}\mu),
\end{eqnarray*}
implying \eqref{eq:sd}.
\begin{longlist}[(ii)]
\item[(i)] Taking $h_n=\ind_{X_n}$, $n\geq1$, where $X_n\in\sB_0$
form a nested sequence of relatively compact sets such that
$X=\bigcup_{n} X_n$, we obtain that the Laplace transform of $\zeta(X)$
is
\[
\lim_n L_\zeta[h_n]=\exp \biggl\{-\int_{\M\setminus\{0\}}
\bigl(1-\mathrm{e}^{-\mu(X)}\bigr)\Lambda(\mathrm{d}\mu) \biggr\} ,
\]
where the integral is finite if and only if $\Lambda$ is supported
by finite measures. If this is the case, the spectral measure
$\sigma$ is defined on $\M_1$ and
\[
\lim_n L_\zeta[h_n]=\exp\{-\sigma(\M_1)\}
\]
defines a finite random variable if and only if $\sigma(\M_1)$ is
finite.

\item[(ii)] If $\zeta$ is $\sas$ with $\alpha=1$, then its values
on all relatively compact sets are deterministic, and so $\zeta$ is
a deterministic measure. Furthermore, (\ref{eq:stabmes}) clearly
holds with $\alpha=1$ for any deterministic~$\zeta$. Finally,
(\ref{eq:sd}) with $\alpha=1$ yields
\[
L_\zeta[h]=\exp \biggl\{-\int_{\SS}
\langle h,\mu\rangle\sigma(\mathrm{d}\mu) \biggr\}
=\exp \{-\langle h,\overline{\mu}\rangle \} .
\]
\end{longlist}\upqed
\end{pf}

If the spectral measure is degenerate, the corresponding $\sas$
measure has a particularly simple structure.

\begin{Th}\label{th:degen}
Assume that the spectral measure $\sigma$ of a $\sas$ measure
$\zeta$ with $0<\alpha<1$ is concentrated on a single measure so
that $\sigma=c\delta_\mu$ for some $\mu\in\SS$ and $c>0$. Then
$\zeta$ can be represented as $\zeta=c^{1/\alpha}\zeta_\alpha\mu$,
where $\zeta_\alpha$ is a positive $\sas$ random variable with
Laplace transform $\E \mathrm{e}^{-z\zeta_\alpha}=\exp\{-z^\alpha\}$.
\end{Th}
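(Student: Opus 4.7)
The plan is to compute the Laplace functional of $\zeta$ using the spectral representation~\eqref{eq:sd} and compare it with the Laplace functional of the proposed candidate $c^{1/\alpha}\zeta_\alpha\mu$. Since the distribution of a random measure is determined by its Laplace functional on $\cz$ (as recalled before~\eqref{eq:lap}), matching the two functionals yields equality in distribution.

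First I would substitute $\sigma = c\delta_\mu$ into~\eqref{eq:sd} to obtain
\begin{displaymath}
  L_\zeta[h] = \exp\bigl\{-c\,\langle h,\mu\rangle^\alpha\bigr\}, \qquad h\in\cz.
\end{displaymath}
Next I would compute the Laplace functional of the candidate random measure $\eta := c^{1/\alpha}\zeta_\alpha\mu$. Since $\mu\in\SSph$ is deterministic and the only randomness comes from the scalar $\zeta_\alpha$,
\begin{displaymath}
  L_\eta[h] = \E\exp\bigl\{-c^{1/\alpha}\zeta_\alpha\langle h,\mu\rangle\bigr\}
  = \exp\bigl\{-\bigl(c^{1/\alpha}\langle h,\mu\rangle\bigr)^\alpha\bigr\}
  = \exp\bigl\{-c\,\langle h,\mu\rangle^\alpha\bigr\},
\end{displaymath}
where the second equality uses the given Laplace transform of $\zeta_\alpha$ with $z=c^{1/\alpha}\langle h,\mu\rangle\ge 0$. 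Since $L_\zeta[h] = L_\eta[h]$ for every $h\in\cz$, the random measures $\zeta$ and $c^{1/\alpha}\zeta_\alpha\mu$ have the same distribution.

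There is essentially no obstacle here: once the spectral representation~\eqref{eq:sd} is available, the argument is a single substitution and an application of the scaling property of the Laplace transform of the one-dimensional positive $\alpha$-stable law. The only thing to verify is that $z=c^{1/\alpha}\langle h,\mu\rangle$ is non-negative so that $z^\alpha$ is well defined, which is immediate because $h\geq 0$ and $\mu$ is a non-negative measure.
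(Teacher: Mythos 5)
Your proof is correct and follows exactly the route the paper takes: the paper's proof simply says it suffices to verify that both Laplace functionals coincide and equal $\exp\{-c\langle h,\mu\rangle^\alpha\}$, and your computation supplies the details of that verification via the spectral representation~\eqref{eq:sd} and the scaling of the one-dimensional stable Laplace transform.
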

\begin{pf}
It suffices to verify that the Laplace transforms of both measures
coincide and equal $L_\zeta[h]=\exp\{-c\langle
h,\mu\rangle^\alpha\}$.
\end{pf}

\begin{example}
\label{ex:leb}
Let $X=\R^d$ and let $\sigma$ be concentrated on the Lebesgue
measure $\ell$. By Theorem \ref{th:degen}, the corresponding $\sas$
measure $\zeta$ is stationary and proportional
to $\zeta_\alpha\ell$.
\end{example}

If $X$ is compact, then all measures $\mu\in\M$ are finite and
condition (\ref{eq:s-fin}) implies that $\sigma(\M_1)$ is finite, so
that the spectral measure is defined on the family of probability
measures on $X$.

\begin{example}[(Finite phase space)]
\label{ex:finite}
In the special case of a finite $X=\{x_1,\dots,x_d\}$ the family of
probability measures becomes the unit simplex
$\Delta_d=\{x\in\R_+^d\dvtx  \sum x_i=1\}$ and a~$\sas$ random measure
$\zeta$ is nothing else but a $d$-dimensional totally skewed (or
one-sided) strictly stable random vector
$\zeta=(\zeta_1,\dots,\zeta_n)$. Its Laplace transform is given
by\looseness=-1
%
\begin{equation}
\label{eq:ltrans-finite}
\E \mathrm{e}^{-\langle h,\zeta\rangle}
=\exp \biggl\{-\int_{\Delta_d} \langle h,x\rangle^\alpha
\sigma(\mathrm{d}x) \biggr\} , \qquad   h\in\R_+^d ,
\end{equation}
where $\sigma$ is a finite measure on the unit simplex.
Alternatively, the integration can be taken
over the unit sphere. It is shown in \cite{mo07} that this Laplace
functional can be written as
%
\begin{equation}
\label{eq:supf}
\E \mathrm{e}^{-\langle h,\zeta\rangle}
=\mathrm{e}^{-H_K(h^\alpha)} ,\qquad   h\in\R_+^d ,
\end{equation}
where $h^\alpha=(h_1^\alpha,\dots,h_d^\alpha)$ and $H_K$ is the
support function of a certain convex set $K$ that appears to be a
generalisation of zonoids.
\end{example}

\begin{example}[($\sas$ random measures on $\R^d$)]
\label{ex:strm}
Fix a probability measure $\mu$ on \mbox{$X=\R^d$} and let $\sigma$ be the
image under the map $x\mapsto\mu(\cdot-x)$ of a Radon measure
$\nu$. The corresponding $\sas$ random measure $\zeta$ has the
Laplace transform
\[
L_\zeta[h]=\exp \biggl\{-\int_{\R^d} \biggl(\int_{\R^d} h(x+y)\mu
(\mathrm{d}x) \biggr)^\alpha\nu(\mathrm{d}y) \biggr\} .
\]
To ensure condition (\ref{eq:s-fin}) it is necessary to assume that
%
\begin{equation}
\label{eq:finite-Radon}
\int_{\R^d} \bigl(\mu(B-y)\bigr)^\alpha\nu(\mathrm{d}y)<\infty
\end{equation}
for all relatively compact $B$. In particular,
%
\begin{equation}\label{eq:addit}
\E \mathrm{e}^{-z\zeta(B)}=\exp \biggl\{-z^\alpha\int_{\R^d}
 \bigl(\mu(B-x) \bigr)^\alpha\nu(\mathrm{d}x) \biggr\} ,\qquad   B\in\sB_0 .
\end{equation}
A similar construction applies if $\mu$ is a general Radon measure.
Since, in
this case, it is difficult to ensure that measures $\mu(\cdot-x)$
belong to $\SS$, the spectral measure $\sigma$ is defined as the
projection onto $\SS$ of the image of $\nu$.
\end{example}

\begin{example}[(Stationary $\sas$ random measures on $\R^d$)]
\label{ex:statrm}
Consider the group $T_y$ of shifts on $\M_1$ acting as
$T_y\mu(\cdot)=\mu(\cdot-y)$. Call \textit{centroid} any measurable
map $C\dvtx \M_1\mapsto\R^d$ such that $C(T_y\mu)=C(\mu)+y$ for every
$y\in\R^d$ and $\mu\in\M_1$. For example, if $\mu^1\thru\mu^d$ are
the marginals of $\mu$, the $i$th component of $C(\mu)$ is
$C^i(\mu)=\inf\{t\dvtx \mu^i(-\infty,t]\geq1/2\}$. Let $\M_1^0$ denote
the set of probability measures with centroid $C(\mu)$ in the
origin. A $\sas$ random measure with the spectral measure supported
by $\M_1$ is stationary if and only if the spectral measure $\sigma$
is invariant with respect to $T_y$, that is, when it is decomposable into
the product $\ell\times\sigma^0$ of the $d$-dimensional Lebesgue
measure and a measure on $\M_1^0$ satisfying
\[\vspace*{2pt}
\int_{\M_1^0} \mu(B)^\alpha\sigma^0(\mathrm{d}\mu)<\infty \qquad \mbox{for
all }
B\in\sB_0 .\vspace*{2pt}
\]
Then the Laplace transform of a stationary $\sas$ measure on $\R^d$
has the following\break form:
%
\begin{equation}\vspace*{2pt}
\label{eq:lapstatsas}
L_\zeta[h]=\exp \biggl\{-\int_{\M_1^0}\int_{\R^d} \biggl(\int_{\R
^d} h(x+y)\mu(\mathrm{d}x) \biggr)^\alpha
\,\mathrm{d}y\, \sigma^0(\mathrm{d}\mu) \biggr\} .\vspace*{2pt}
\end{equation}
In particular, if $\sigma^0$ charges only one probability measure
$\mu$, we obtain $\zeta$ from Example \ref{ex:strm} with $\nu$ being
the Lebesgue measure.

More generally, take a
homogeneous of order $-(\alpha+d)$ measure
$\Lambda_0$ on $\M\setminus\{0\}$ and set $\Lambda(\mathrm{d}\mu)=\int_{\R^d}
\Lambda_0(\mathrm{d}\mu-x) \,\mathrm{d}x$. Provided \eqref{eq:lm-new} is satisfied,
$\Lambda$ is the L\'evy measure of a~stationary $\sas$ measure.
\end{example}

Note that usually in the literature, and particularly in \cite
{SamTaq94}, Section 3.3, the term $\alpha$-sta\-ble measure is
reserved for an \textit{independently scattered} measure, that is, a measure
with independent $\alpha$-stable values on disjoint sets. Our notion
is more general as the following proposition shows.

\begin{Th}
\label{th:indep}
A $\sas$ random measure with $\alpha\in(0,1)$ is independently
scattered if and only if its spectral measure $\sigma$ is supported
by the set $\{\delta_x\dvtx x\in X\}\subseteq\SS$ of Dirac
measures.\looseness=1
\end{Th}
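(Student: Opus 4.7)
The plan is to translate the independent-scattering property into a factorisation of the Laplace functional from Theorem~\ref{th:finite} and then exploit the strict subadditivity of $t\mapsto t^\alpha$ for $\alpha\in(0,1)$.

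For \textsl{sufficiency}, assume $\sigma$ is concentrated on $\{\delta_x:\;x\in X\}$. Let $\tilde\sigma$ denote its push-forward to $X$ under $\delta_x\mapsto x$; since $\langle h,\delta_x\rangle=h(x)$, Theorem~\ref{th:finite} gives
\begin{displaymath}
  L_\zeta[h]=\exp\Bigl\{-\int_X h(x)^\alpha\,\tilde\sigma(dx)\Bigr\}.
\end{displaymath}
For disjoint $B_1,\dotsc,B_n\in\sB_0$ and $t_1,\dotsc,t_n\geq0$ the indicators do not overlap, so pointwise $\bigl(\ssum_i t_i\ind_{B_i}\bigr)^\alpha=\ssum_i t_i^\alpha\ind_{B_i}$, and therefore
\begin{displaymath}
  L_\zeta\Bigl[\ssum_i t_i\ind_{B_i}\Bigr]=\prod_i \exp\{-t_i^\alpha\tilde\sigma(B_i)\}=\prod_i L_\zeta[t_i\ind_{B_i}],
\end{displaymath}
which is precisely joint independence of $\zeta(B_1),\dotsc,\zeta(B_n)$.

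For \textsl{necessity}, assume $\zeta$ is independently scattered. For any disjoint $B_1,B_2\in\sB_0$ and $t_1,t_2>0$, equating $L_\zeta[t_1\ind_{B_1}+t_2\ind_{B_2}]$ with $L_\zeta[t_1\ind_{B_1}]\,L_\zeta[t_2\ind_{B_2}]$ via~\eqref{eq:sd} yields
\begin{displaymath}
  \int_\SSph\bigl(t_1\mu(B_1)+t_2\mu(B_2)\bigr)^\alpha\sigma(d\mu)
  =\int_\SSph\bigl(t_1^\alpha\mu(B_1)^\alpha+t_2^\alpha\mu(B_2)^\alpha\bigr)\sigma(d\mu).
\end{displaymath}
Since $(a+b)^\alpha\leq a^\alpha+b^\alpha$ with strict inequality whenever $a,b>0$, equality of these integrals of non-negative integrands forces $\mu(B_1)\mu(B_2)=0$ for $\sigma$-a.e.\ $\mu$.

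The last step is to collapse the family of exceptional sets indexed by all disjoint pairs into a single null set. Using second countability, fix a countable base $\{B_n\}$ of relatively compact open sets; the intersection of the $\sigma$-null exceptional sets over the countably many disjoint pairs $(B_i,B_j)$ produces one $\sigma$-null set $N$ such that no $\mu\notin N$ charges two disjoint base neighbourhoods simultaneously. As $X$ is Hausdorff, two distinct points in $\supp\mu$ could be separated by such a pair, so $\supp\mu$ consists of a single point and $\mu=c\delta_x$ for some $x\in X$ and $c>0$. The normalisation built into $\SSph$ fixes $c$: the singleton $\{x\}$ is relatively compact, so $\mu(B_0)=\mu(X)=c<\infty$, hence $i(\mu)=0$ and $\mu(B_0)=1$, giving $c=1$ and $\mu=\delta_x$. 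The main obstacle is exactly this final bookkeeping step—pairing the pointwise strict-inequality argument with a countable exhaustion and then reconciling $\mu=c\delta_x$ with the normalisation on $\SSph$ to land on a genuine Dirac mass.
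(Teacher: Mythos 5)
Your proof is correct and follows essentially the same route as the paper: the same factorisation of the Laplace functional \eqref{eq:sd} for sufficiency, and for necessity the same observation that strict subadditivity of $t\mapsto t^\alpha$ on $(0,1)$ forces $\mu(B_1)\mu(B_2)=0$ for $\sigma$-a.e.\ $\mu$ and disjoint $B_1,B_2$. Your final bookkeeping — collapsing the exceptional null sets over a countable base and checking that the normalisation on $\SSph$ yields $c=1$ — is a welcome explicit treatment of a step the paper leaves implicit.
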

\begin{pf}
\textit{Sufficiency}. The Laplace functional (\ref{eq:sd}) of $\zeta$
becomes
\[\vspace*{2pt}
L_\zeta[h]=\exp \biggl\{-\int_X h^\alpha(x) \tilde\sigma(\mathrm{d}x)
\biggr\} ,\vspace*{2pt}
\]
where $\tilde{\sigma}$ is the image of $\sigma$ under the map
$\delta_x\mapsto x$. It is easy to see that $L_\zeta[\sum_{i=1}^n
h_i]=\prod_{i=1}^n L_\zeta[h_i]$ for all functions $h_1\thru h_n$
with disjoint supports, meaning, in particular, that for any
disjoint Borel sets $B_1\thru B_n$ the variables $\zeta(B_i)$ with
Laplace transforms $L_\zeta[z_i\ind_{B_i}]$, $i=1\thru n$, are
independent, so that $\zeta$ is independently scattered.

\textit{Necessity}. Take two disjoint sets $B_1, B_2$ from an at most
countable base of the topology on $X$. Since $\zeta(B_1)$ and $\zeta
(B_2)$ are
independent, we have that
\[
L_\zeta[\ind_{B_1}+\ind_{B_2}]= L_\zeta[\ind_{B_1}]
L_\zeta[\ind_{B_2}] .
\]
By (\ref{eq:sd}),
\[
\int_{\SS}\bigl[\mu^\alpha(B_1) +
\mu^\alpha(B_2)-\bigl(\mu(B_1)+\mu(B_2)\bigr)^\alpha\bigr]
\sigma(\mathrm{d}\mu)=0 .
\]
Since $\alpha\in(0,1)$, the integrand is a non-negative expression
whatever $\mu(B_1), \mu(B_2)\geq0$ are. Since $\sigma$ is a
positive measure, the integral is zero only if the integrand
vanishes on the support of $\sigma$, that is, for $\sigma$-almost all
$\mu$ either $\mu(B_1)=0$ or $\mu(B_2)=0$. Since this is true for
all disjoint $B_1, B_2$ from the base, $\mu$ is concentrated at a
single point.
\end{pf}

Along the same lines it is possible to prove the following result.

\begin{Th}
The values $\zeta(B_1),\dots,\zeta(B_n)$ of a $\sas$ random measure
$\zeta$ with $\alpha\in(0,1)$ on disjoint sets
$B_1,\dots,B_n$ are independent if and only if the support of the
spectral measure $\sigma$ (or of the L\'evy measure $\Lambda$) does
not include any measure that has positive values on at least two
sets from $B_1,\dots,B_n$.
\end{Th}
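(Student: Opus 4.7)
The plan is to adapt the proof of Theorem~\ref{th:indep}, restricting attention to the finite family $B_1,\dots,B_n$. Mutual independence of $\zeta(B_1),\dots,\zeta(B_n)$ is equivalent to the factorisation
\begin{displaymath}
L_\zeta\Bigl[\sum_{i=1}^n z_i\ind_{B_i}\Bigr]=\prod_{i=1}^n L_\zeta[z_i\ind_{B_i}]
\end{displaymath}
for all $z_1,\dots,z_n\geq0$. Substituting the spectral representation (\ref{eq:sd}) on both sides and taking logarithms, this factorisation becomes the integral identity
\begin{displaymath}
\int_{\SSph}\Bigl[\sum_{i=1}^n (z_i\mu(B_i))^\alpha-\Bigl(\sum_{i=1}^n z_i\mu(B_i)\Bigr)^\alpha\Bigr]\,\sigma(d\mu)=0\,.
\end{displaymath}

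The central observation, identical to the one used in Theorem~\ref{th:indep}, is that for $\alpha\in(0,1)$ the map $x\mapsto x^\alpha$ is strictly subadditive on $\R_+$. Hence the integrand is pointwise non-negative; moreover, for any fixed choice of strictly positive coefficients $z_1,\dots,z_n>0$ it vanishes at a given $\mu$ if and only if at most one of the values $\mu(B_1),\dots,\mu(B_n)$ is strictly positive. Consequently the integral is zero precisely when $\sigma$-almost every $\mu$ charges at most one of the sets $B_i$, which is exactly the claimed support condition on $\sigma$.

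Both implications then follow: sufficiency because the integrand vanishes pointwise on the support of $\sigma$, and necessity because a positive integrand on a set of positive $\sigma$-measure would contradict the integral identity. The equivalent statement for the L\'evy measure $\Lambda$ is obtained from the polar decomposition $\Lambda=\widehat{\sigma}\otimes\theta_\alpha$ described before Theorem~\ref{th:finite}: the property ``$\mu$ charges at most one of the $B_i$'' is invariant under scaling of $\mu$ by a positive constant, so it holds for $\Lambda$-almost every $\mu$ if and only if it holds for $\widehat{\sigma}$-almost every $\mu$, and hence for $\sigma$-almost every $\mu$. No substantial obstacle is expected; the only point requiring care is the identification of the zero set of the integrand via the strict subadditivity of $x^\alpha$, after which the argument is essentially a direct specialisation of the proof of Theorem~\ref{th:indep}.
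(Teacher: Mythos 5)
Your argument is correct and is essentially the paper's own: the paper proves this result ``along the same lines'' as Theorem~\ref{th:indep}, i.e.\ by factorising the Laplace functional via (\ref{eq:sd}) and using the strict subadditivity of $x\mapsto x^\alpha$ for $\alpha\in(0,1)$ to identify the zero set of the non-negative integrand, exactly as you do. Your additional remark that the scale-invariance of the condition transfers it between $\sigma$ and $\Lambda$ through the polar decomposition is a correct and harmless elaboration.
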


\begin{example}[(Self-similar random measures)]
Recently, Vere-Jones \cite{VJ05} has introduced a wide class of self-similar
random measures on $\R^d$, that is, the measures satisfying
%
\begin{equation}
\label{eq:ssim}
\zeta(B)\deq a^{-H}\zeta(aB)
\end{equation}
for all Borel $B$, all positive $a$ and some $H,$ which is
then called the \textit{similarity index}. These measures are generally not
independently scattered although stationary independently scattered
$\sas$ measures are self-similar with index $H=1/\alpha$.

Introduce operation $D_a$ on measures $\mu\in\M$ by setting
$(D_a\mu)(B)=\mu(aB)$, $B\in\sB$, and the corresponding operation
$(\tilde{D}_a\sigma)(M)=\sigma(D_a M)$ uplifted to $\sigma$ on
measurable $M\subset\SS$. Assume that the spectral measure is
supported by $\M_1$. Since
\[
L_{a^{-H}D_a\zeta}[h(x)]= L_\zeta[a^{-H}h(a^{-1}x)] ,
\]
then writing property \eqref{eq:ssim} for the Laplace
transform \eqref{eq:sd}, we obtain
\begin{eqnarray*}
\int_\SS\langle h(x),\mu\rangle^{\alpha}\sigma(\mathrm{d}\mu)&=& \int_\SS
\langle a^{-H} h(a^{-1}x),\mu\rangle^{\alpha}\sigma(\mathrm{d}\mu)\\
&=& a^{-\alpha H}\int_{\M_1}\langle h,D_a\mu\rangle^{\alpha}\sigma
(\mathrm{d}\mu)
= a^{-\alpha H}\int_{\M_1}\langle h,\mu\rangle^{\alpha}(\tilde
{D}_{a^{-1}}\sigma)(\mathrm{d}\mu) ,
\end{eqnarray*}
where in the last equality we used the fact that that
$D_a\M_1=\M_1$. Therefore a $\sas$ random measure is self-similar
with index $H$ if and only if $\sigma=a^{-\alpha H}
\tilde{D}_{a^{-1}}\sigma$.

As in the proof of Theorem \ref{th:indep} above, in the case of
independently scattered $\sas$ measures the last identity could be
written as $\tilde{\sigma}(\mathrm{d}x)=a^{-\alpha H}
\tilde{D}_{a^{-1}}\tilde{\sigma}(\mathrm{d}x)=a^{-\alpha
H}\tilde{\sigma}(a \,\mathrm{d}x)$ for the image $\tilde{\sigma}$ of
$\sigma$ under the map $\delta_x\mapsto x$. In particular, in the
stationary case $\tilde{\sigma}$ is necessarily proportional to the
Lebesgue measure and the corresponding random measure becomes
self-similar if and only if $\alpha=1/H$.
\end{example}

The following theorem provides a LePage representation of a $\sas$
random measure.

\begin{Th}
\label{th:lp}
A random measure $\zeta$ is $\sas$ if and only if
%
\begin{equation}
\label{eq:lepagezeta}
\zeta\deq\sum_{\mu_i\in\Psi}\mu_i ,
\end{equation}
where $\Psi$ is the Poisson process on $\M\setminus\{0\}$ driven by
an intensity measure $\Lambda$ satisfying~(\ref{eq:lmreg}) and such that
$\Lambda(tA)=t^{-\alpha}\Lambda(A)$ for all $t>0$ and any measurable
$A$. In this case $\Lambda$ is exactly the L\'evy measure of
$\zeta$. Convergence of the series in \eqref{eq:lepagezeta} is
in the sense of the vague convergence of measures.

If the spectral measure $\sigma$ corresponding to $\Lambda$
satisfies $c=\sigma(\SS)<\infty$, then
%
\begin{equation}
\label{eq:lp}
\zeta\deq b\sum_{k=1}^\infty
\gamma_k^{-1/\alpha} \eps_k ,\qquad
b= \biggl(\frac{c}{\Gamma(1-\alpha)} \biggr)^{1/\alpha},
\end{equation}
where $\eps_1,\eps_2,\ldots$ are \iid random measures with
distribution $c^{-1}\sigma$ and $\gamma_k=\xi_1+\cdots+\xi_k$,
$k\geq1$, for a sequence of independent exponentially distributed
random variables $\xi_k$ with mean one.
\end{Th}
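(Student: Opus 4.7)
The plan is to deduce both directions from the Laplace-functional characterisation already proved in Theorems \ref{thr:gen-stas} and \ref{th:finite}, and then obtain the explicit series (\ref{eq:lp}) by applying the polar decomposition of $\Lambda$ to Campbell's theorem for a Poisson process.

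For the equivalence in the first part, I would begin with sufficiency: given a Poisson process $\Psi$ on $\M\setminus\{0\}$ with intensity $\Lambda$ satisfying $\Lambda(tA)=t^{-\alpha}\Lambda(A)$ and (\ref{eq:lmreg}), Campbell's formula yields
\begin{displaymath}
  \E\exp\Bigl\{-\bigl\langle h,\sum_{\mu_i\in\Psi}\mu_i\bigr\rangle\Bigr\}
  =\exp\Bigl\{-\int_{\M\setminus\{0\}}(1-e^{-\langle h,\mu\rangle})\Lambda(d\mu)\Bigr\}\,,
\end{displaymath}
which is exactly the Laplace functional (\ref{eq:lf}), so $\zeta\bydef\sum_{\mu_i\in\Psi}\mu_i$ is $\sas$ by Theorem \ref{thr:gen-stas}. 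Conversely, if $\zeta$ is $\sas$ with $\alpha\in(0,1)$, then Theorem \ref{thr:gen-stas} gives its Laplace functional in the form (\ref{eq:lf}) with a homogeneous L\'evy measure $\Lambda$, which coincides with the Laplace functional of the Poisson sum above; uniqueness of Laplace functionals on $\cz$ then yields (\ref{eq:lepagezeta}). The case $\alpha=1$ is handled separately by viewing the deterministic measure as a degenerate ``Poisson sum''.

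For the series representation (\ref{eq:lp}), I would use the polar decomposition $\M=\SSph\times\R_+$ and the fact, discussed right before Theorem \ref{th:finite}, that $\Lambda$ disintegrates as $\Lambda(d\mu)=\theta_\alpha(dt)\,\widehat\sigma(d\mu')$ with $\widehat\sigma=\sigma/\Gamma(1-\alpha)$ and total mass $\widehat\sigma(\SSph)=c/\Gamma(1-\alpha)$. The atoms of $\Psi$ can thus be generated as $\mu_k=\rho_k\eps_k$, where $\{\eps_k\}$ are i.i.d.\ with law $c^{-1}\sigma$ on $\SSph$, independent of the radial points $\{\rho_k\}$ which form a Poisson process on $(0,\infty)$ with intensity measure $(c/\Gamma(1-\alpha))\theta_\alpha$. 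Since $\theta_\alpha([r,\infty))=r^{-\alpha}$, ordering the $\rho_k$ decreasingly and applying the standard transformation $\rho\mapsto\rho^{-\alpha}$ turns them into the ordered arrival times of a homogeneous Poisson process of rate $c/\Gamma(1-\alpha)$; hence $\rho_k^{-\alpha}=\gamma_k\Gamma(1-\alpha)/c$, giving $\rho_k=b\gamma_k^{-1/\alpha}$ with $b=(c/\Gamma(1-\alpha))^{1/\alpha}$. Summing the atoms yields (\ref{eq:lp}).

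The main obstacle is justifying almost sure convergence of the series $\sum_k \gamma_k^{-1/\alpha}\eps_k$ in $\M$ when $c=\sigma(\SSph)<\infty$ but $X$ is not compact, so that individual $\eps_k$ need not be finite measures. I would treat this by testing against arbitrary $h\in\cz$: the scalar sum $\sum_k\gamma_k^{-1/\alpha}\langle h,\eps_k\rangle$ is a Poisson series whose absolute convergence follows from finiteness of its Laplace transform, which is in turn (\ref{eq:s-fin}) rewritten in the normalized form $\int_{\SSph}\langle h,\mu\rangle^\alpha\,\sigma(d\mu)<\infty$. Since $\cz$ contains a separating countable subfamily (as used in the proof of Theorem \ref{thr:gen-stas}), simultaneous convergence of all these scalar sums on this subfamily yields convergence of the measure-valued series to a Radon measure, and matching Laplace functionals via (\ref{eq:sd}) closes the argument.
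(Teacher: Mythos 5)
Your proposal is correct and follows essentially the same route as the paper: both directions are settled by matching the Laplace functional of the Poisson sum (Campbell's formula) with the representation (\ref{eq:lf}) from Theorem~\ref{thr:gen-stas}, the almost sure convergence is reduced to scalar sums $\sum_i\langle h,\mu_i\rangle$ for $h\in\cz$, and (\ref{eq:lp}) is obtained from the polar decomposition of $\Lambda$. Your write-up merely makes explicit two points the paper leaves compressed — the marked-Poisson construction $\mu_k=\rho_k\eps_k$ with the time change $\rho\mapsto\rho^{-\alpha}$ yielding $\rho_k=b\gamma_k^{-1/\alpha}$, and the passage from scalar convergence on a countable separating family to vague convergence of the measure-valued series — both of which are consistent with the paper's citations of Resnick.
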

\begin{pf}
Consider a continuous function $h\in\cz$ and define a map
$\mu\mapsto\langle h,\mu\rangle$ from $\M$ to $\R_+=[0,\infty)$.
The image of $\Psi$ under this map becomes a Poisson process
$\{x_i,  i\geq1\}$ on $\R_+$ with intensity measure $\theta$ that
satisfies
\[
\theta([r,\infty))=\Lambda(\{\mu\dvtx  \langle h,\mu\rangle\geq r\}
) .
\]
Since $1-\mathrm{e}^{-x}\geq a\one_{x\geq r}$ for any $r>0$ and some constant
$a>0$, Condition (\ref{eq:lmreg}) implies that
$\theta([r,\infty))<\infty$. Then the homogeneity property of
$\Lambda$
yields that $\theta([r,\infty))=cr^{-\alpha}$ for all $r>0$ and some
constant $c>0$.

It is well known (see \cite{SamTaq94,davmolzuy08})
that for such intensity measure $\theta$ the sum $\sum_i x_i$
converges almost surely. Thus, $\sum_{\mu_i\in\Psi}\langle
h,\mu_i\rangle$ converges for each continuous $h\in\cz$, so that the
series (\ref{eq:lepagezeta}) converges in the vague topology to a
random measure see \cite{res87}, equation~(3.14).

By Resnick \cite{res87}, Proposition 3.19, a sequence of random measures converges
weakly if and only if the values of their Laplace functionals on any
continuous function converge. This is seen by noticing that the
Laplace functional (\ref{eq:lf}) of $\zeta$ coincides with the
Laplace functional of the right-hand side of (\ref{eq:lepagezeta}),
where the latter can be computed as the probability generating
functional (p.g.fl.) of a Poisson process,
see (\ref{eq:pgflpois}).

Finally, \eqref{eq:lp} follows from the polar decomposition for
$\Lambda$.
\end{pf}

\section{Discrete stability for point processes}
\label{sec:stab-with-resp}

\textit{Point processes} are counting random measures, that is, random
elements with realisations in the set of locally finite counting
measures. Each counting measure $\phi$ can be represented as the sum
$\phi=\sum\delta_{x_i}$ of unit masses, where we allow for the
multiplicity of the support points $\{x_i,  i\geq1\}$. The
distribution of a point process $\Phi$ can be characterised by its
p.g.fl. defined
on functions $u\dvtx X\mapsto(0,1]$ such that $1-u\in\cz$ by means of
%
\begin{equation}\vspace*{2pt}
\label{eq:pgfldef}
G_\Phi[u]=\E\exp \{ \langle\log u,\Phi\rangle \}=
\E\prod_{x_i\in\supp\Phi} u(x_i)^{\Phi(\{x_i\})} =L_\Phi[-\log u]\vspace*{2pt}
\end{equation}
with the convention that the product is 1 if $\Phi$ is a null measure
(i.e., the realisation of the process contains no
points). The \pgfl can be extended to pointwise monotone limits of the
functions from $\cz$ at the expense of allowing for infinite and zero
values; see, for example,  \cite{DalVJon08}, Chapter 9.4.

A \textit{Poisson process} $\Pi_\mu$ with intensity measure $\mu$ is
characterised by the property that for any finite collection of
disjoint sets $B_1\thru B_n\in\sB_0$ the variables $\Pi_\mu
(B_1)\thru
\Pi_\mu(B_n)$ are mutually independent Poisson distributed random
variables with means $\mu(B_1)\thru\mu(B_n)$. The \pgfl of a Poisson
process $\Pi_\mu$ is given by
%
\begin{equation}\vspace*{2pt}
\label{eq:pgflpois}
G_{\Pi_\mu}[u]=\exp \{ - \langle1-u,\mu\rangle \} .\vspace*{2pt}
\end{equation}
If the intensity measure itself is a random measure $\zeta$, then the
obtained (doubly
stochastic) point process is called a \textit{Cox process}. Its \pgfl\
is given by
%
\begin{equation}
\label{eq:coxpgfl}\vspace*{2pt}
G_{\Pi_\zeta}[u]=\E\exp \{ - \langle
1-u,\zeta\rangle \}=L_\zeta[1-u] .\vspace*{2pt}
\end{equation}
Note that the Cox process is stationary if and only if the random
measure $\zeta$ is stationary.\looseness=1

Addition of counting measures is well defined and leads to the
definition of the superposition operation for point processes.
However, multiplication of the counting measure by positive numbers
cannot be defined by multiplying its values -- they no longer remain
integers for arbitrary multiplication factors. In what follows we
define a stochastic multiplication operation that corresponds to the
thinning operation for point processes. Namely, each unit mass
$\delta_{x_i}$ in the representation of the counting measure
$\phi=\sum_i \delta_{x_i}$ is removed with probability $1-t$ and
retained with probability $t$ independently of other masses. The
resulting counting measure $t\circ\phi$ becomes random (even if $\phi$
is deterministic) and is known under the name of \textit{independent
thinning} in the point process literature; see, for example,  \cite
{DalVJon08}, Chapter 11.3, or
\cite{mkm}, 
Chapter 7, where relations to cluster and Cox processes were established.

To complement this pathwise description, it is possible to define the
thinning operation on probability distributions of point
processes. Namely, the thinned process $t\circ\Phi$ has the \pgfl
%
\begin{equation}\vspace*{2pt}
\label{eq:pgflthin}
G_{t\circ\Phi}[u]=G_\Phi[tu+1-t]=G_\Phi[1-t(1-u)] ;\vspace*{2pt}
\end{equation}
see, for example, \cite{DalVJon08}, page 155. From this, it is easy to establish
the following properties of the thinning operation.

\begin{Th}\label{th:thinprop}
The thinning operation $\circ$ is associative, commutative and
distributive with respect to superposition of point processes, that is,
%
\[
 t_1\circ(t_2\circ\Phi)\deq(t_1 t_2)\circ\Phi\deq
t_2\circ(t_1\circ\Phi)
\]
and
\[
t\circ(\Phi+\Phi')\deq(t\circ\Phi) + (t\circ\Phi')
\]
for any $t, t_1, t_2\in[0,1]$ and any independent point processes
$\Phi$ and $\Phi'$. For disjoint Borel sets $B_1$ and $B_2$, random
variables $(t\circ
\Phi)(B_1)$ and $(t\circ\Phi)(B_2)$ are conditionally independent
given $\Phi$ and there exists a coupling of $\Phi$ and $t\circ\Phi$
(described above) such that $(t\circ\Phi)(B)\leq\Phi(B)$ almost
surely for any $B\in\sB$.
\end{Th}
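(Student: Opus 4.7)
The plan is to derive the algebraic identities (associativity, commutativity, distributivity) directly from the pgfl formula~(\ref{eq:pgflthin}), which uniquely determines the distribution of a point process, and then read off the conditional independence and coupling claims from the pathwise construction of the thinning.

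For associativity, I would substitute the formula~(\ref{eq:pgflthin}) into itself. Applying thinning by $t_2$ first and then by $t_1$ gives
\begin{displaymath}
G_{t_1\circ(t_2\circ\Phi)}[u] = G_{t_2\circ\Phi}[1-t_1(1-u)] = G_\Phi\bigl[1-t_2\bigl(1-(1-t_1(1-u))\bigr)\bigr] = G_\Phi[1-t_1 t_2 (1-u)],
\end{displaymath}
which equals $G_{(t_1 t_2)\circ\Phi}[u]$ again by~(\ref{eq:pgflthin}). Commutativity then follows because $t_1 t_2 = t_2 t_1$. For distributivity over superposition, I would use that for independent $\Phi,\Phi'$ one has $G_{\Phi+\Phi'}[u] = G_\Phi[u]\,G_{\Phi'}[u]$, so that
\begin{displaymath}
G_{t\circ(\Phi+\Phi')}[u] = G_{\Phi+\Phi'}[1-t(1-u)] = G_\Phi[1-t(1-u)]\,G_{\Phi'}[1-t(1-u)] = G_{t\circ\Phi}[u]\,G_{t\circ\Phi'}[u],
\end{displaymath}
which is the pgfl of $(t\circ\Phi)+(t\circ\Phi')$ when the two thinnings are carried out using independent Bernoulli families. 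Equality in distribution follows from the fact that the pgfl determines the law of a point process.

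For the conditional independence and the coupling, I would appeal to the pathwise definition of $t\circ\Phi$: conditionally on $\Phi=\sum_i\delta_{x_i}$, attach independent Bernoulli$(t)$ marks $\beta_i$ to the points and set $t\circ\Phi=\sum_i \beta_i\delta_{x_i}$. For disjoint $B_1,B_2\in\sB$, the random variables $(t\circ\Phi)(B_1)$ and $(t\circ\Phi)(B_2)$ are determined by disjoint sub-families of the $\beta_i$'s (those with $x_i\in B_1$ and those with $x_i\in B_2$), hence they are conditionally independent given $\Phi$. The same construction delivers the coupling: since $\beta_i\in\{0,1\}$, we have $(t\circ\Phi)(B)=\sum_{x_i\in B}\beta_i \le \sum_{x_i\in B} 1 = \Phi(B)$ almost surely for every $B\in\sB$.

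There is no genuine obstacle here; the argument is a routine verification once one adopts the pgfl~(\ref{eq:pgflthin}) and the pathwise construction as the two complementary descriptions of the operation. The only point requiring mild care is to note that the pgfl characterises a point process on functions $u$ with $1-u\in\cz$, so one should check that the function $1-t(1-u)$ stays in the admissible class whenever $u$ does, which is immediate because $1-(1-t(1-u)) = t(1-u)$ inherits compact support and boundedness from $1-u$.
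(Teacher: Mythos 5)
Your proof is correct and follows exactly the route the paper intends: the paper states this result without proof, remarking only that it follows easily from the pgfl identity~(\ref{eq:pgflthin}) and the pathwise (Bernoulli-marking) description of thinning, and your verification of the algebraic identities via pgfl substitution and of the conditional independence and coupling via the marks is precisely that argument carried out in full. The closing remark that $1-t(1-u)$ stays in the admissible class is a nice touch of care that the paper omits.
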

%


If the phase space $X$ consists of one point, the point process $\Phi$
becomes a non-negative integer random variable $\Phi(X)$. Since
\eqref{eq:pgflthin} turns into \eqref{eq:tphi}, the thinning
operation becomes the classical discrete multiplication operation
acting on positive integer random variables. Keeping this in mind, we
can define the notion of discrete stability for point processes.

\begin{definition}
\label{def:das}
A point process $\Phi$ (or its probability distribution) is called
\textit{discrete $\alpha$-stable} or \textit{$\alpha$-sta\-ble with
respect to thinning} (notation $\das$), if for any $0\leq t \leq1$
one has
%
\begin{equation}
\label{eq:thinstabdef}
t^{1/\alpha} \circ\Phi'+(1-t)^{1/\alpha}\circ\Phi''\deq\Phi ,
\end{equation}
where $\Phi'$ and $\Phi''$ are independent copies of $\Phi$.
\end{definition}

Definition \ref{def:das} and \eqref{eq:pgflthin} yield that $\Phi$ is
$\das$ if and only if its \pgfl possesses the property:
%
\begin{equation}
\label{eq:daspgfl}
G_\Phi[1-t^{1/\alpha}(1-u)] G_\Phi[1-(1-t)^{1/\alpha}(1-u)]
=G_\Phi[u]
\end{equation}
for any positive function $u$ such that $1-u\in\cz$ and all $0<t<1$.

Relation \eqref{eq:thinstabdef} implies that $\Phi(B)$ is a discrete
$\alpha$-stable random variable for any \mbox{$B\in\sB$}. Hence $\das$ point
processes exist only for $\alpha\in(0,1]$ due to the already mentioned
result~\eqref{eq:lawdas}.
Now we come to the main result of this section.
\begin{Th}
\label{th:coxpros}
A point process $\Phi$ is $\das$ if and only if it is a Cox process
$\Pi_\zeta$ with a~$\sas$ intensity measure $\zeta$.
\end{Th}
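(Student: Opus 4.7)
The strategy is to handle the two implications separately. Sufficiency reduces to a direct computation with \pgfl and Laplace functionals, while necessity requires identifying $\Phi$ as a Cox process and then extracting stability of the driving measure.

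For sufficiency I would first extract from Theorem~\ref{th:finite} the homogeneity $L_\zeta[sh]=L_\zeta[h]^{s^\alpha}$ for $s\geq0$, which is immediate from (\ref{eq:sd}). Combining this with the Cox \pgfl identity $G_{\Pi_\zeta}[u]=L_\zeta[1-u]$ of (\ref{eq:coxpgfl}), the $\das$ condition (\ref{eq:daspgfl}) follows in one line:
\begin{equation*}
L_\zeta[t^{1/\alpha}(1-u)]\cdot L_\zeta[(1-t)^{1/\alpha}(1-u)]=L_\zeta[1-u]^{t}\cdot L_\zeta[1-u]^{1-t}=L_\zeta[1-u]\,.
\end{equation*}

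For necessity I would iterate (\ref{eq:thinstabdef}) with $t=k/n$ to obtain, for each $n\geq1$,
\begin{equation*}
\Phi\deq\sum_{i=1}^n n^{-1/\alpha}\circ\Phi_i\,,
\end{equation*}
with $\Phi_1,\dots,\Phi_n$ iid copies of $\Phi$. This exhibits $\Phi$ as the superposition of $n$ sparsely thinned (since $n^{-1/\alpha}\to0$) iid point processes, i.e.\ as a null-array of thinnings. The classical limit theorem for such triangular arrays (see Kallenberg's thinning theorem, \cf\ \cite[Ch.~11.2]{DalVJon:88}) asserts that any weak limit of such arrays is necessarily a Cox process. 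Since the distribution of the array equals that of $\Phi$ for every $n$, we conclude $\Phi=\Pi_\zeta$ for some random intensity measure $\zeta$. Plugging $G_\Phi[u]=L_\zeta[1-u]$ into (\ref{eq:daspgfl}) and setting $h=1-u$ yields
\begin{equation*}
L_\zeta[t^{1/\alpha}h]\,L_\zeta[(1-t)^{1/\alpha}h]=L_\zeta[h]\,,\quad h\in\cz\,,
\end{equation*}
so by uniqueness of the Laplace functional $\zeta$ satisfies (\ref{eq:stabmes}) and is $\sas$.

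The main obstacle is the necessity direction, specifically verifying the null-array (uniform asymptotic negligibility) hypothesis required to invoke the thinning-limit theorem: for each $B\in\sB_0$ one must check that $\max_{i\leq n}\Prob\{(n^{-1/\alpha}\circ\Phi_i)(B)>0\}\to0$, which I expect to follow from a tail estimate on the discrete $\alpha$-stable count $\Phi(B)$ together with elementary properties of Binomial thinning. An alternative route that bypasses this limit passage proceeds directly through the KLM representation of the infinitely divisible $\Phi$: setting $\Psi[v]=-\log G_\Phi[1-v]$, iteration of (\ref{eq:daspgfl}) yields the scaling $\Psi[cv]=c^\alpha\Psi[v]$, and one then shows that this forces the KLM cluster measure of $\Phi$ to be the image under $\mu\mapsto\Pi_\mu$ of an $\sas$ L\'evy measure $\Lambda$ on $\M\setminus\{0\}$ of the kind described in Theorem~\ref{thr:gen-stas}, realizing $\Phi$ as the Cox process driven by the corresponding $\sas$ measure $\zeta$.
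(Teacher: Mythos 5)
Your proof is correct, but the necessity direction takes a genuinely different route from the paper. The paper's proof of necessity works directly with the functional $L[u]=G_\Phi[1-u]$: after establishing infinite divisibility it defines $L[u]=(G_\Phi[1-m^{-1/\alpha}u])^m$ for large $m$ (to make sense of $G_\Phi$ at arguments outside $[0,1]$), passes to the limit, and invokes Schoenberg's theorem to show that $L$ is positive definite because $G_\Phi$ is, then checks continuity to conclude that $L$ is the Laplace functional of a random measure $\zeta$ whose stability is read off from (\ref{eq:daspgfl}). You instead recognise $\Phi\deq n^{-1/\alpha}\circ S_n$ as a sequence of increasingly sparse thinnings and invoke Kallenberg's thinning theorem, which identifies any such limit as a Cox process; this is exactly the external result the paper itself uses later, in Theorem~\ref{th:clt}, so there is no circularity, and in the form cited there (Kallenberg, Th.~8.4; \cite[Th.~9.3.III]{DalVJon:88}) it requires only that the retention probabilities tend to zero, so the uniform-negligibility verification you flag as the main obstacle is not actually needed --- though your sketch of it (dominated convergence applied to $\E[1-(1-n^{-1/\alpha})^{\Phi(B)}]$ using local finiteness of $\Phi$) would go through. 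What each approach buys: the paper's argument constructs $\zeta$ from first principles via harmonic analysis on semigroups and keeps the proof self-contained relative to \cite{BergChriRes:84}, whereas yours is shorter, makes the link to the limit theorem of Section~\ref{sec:stab-with-resp} transparent, and delegates the hard analytic work to a classical theorem. Your second, ``alternative'' route through the KLM measure is only a sketch --- the claim that the scaling of the log-\pgf{}l forces the KLM measure to have the form (\ref{eq:q}) is precisely the nontrivial content and would need a real argument --- but since your first route is complete, this does not affect the verdict. The sufficiency direction is essentially the paper's computation (homogeneity of the log-Laplace functional composed with (\ref{eq:coxpgfl})); just note that Theorem~\ref{th:finite} gives (\ref{eq:sd}) only for $\alpha\in(0,1)$, the case $\alpha=1$ being handled trivially since $\zeta$ is then deterministic.
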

\begin{pf}
\textit{Sufficiency}. If $\zeta$ is a $\sas$ random measure, then the
corresponding Cox process~$\Pi_\zeta$ has the \pgfl
%
\begin{equation}
\label{eq:pg-pp}
G_{\Pi_\zeta}[u]=L_\zeta[1-u]=
\exp \biggl\{-\int_{\M\setminus\{0\}}
\bigl(1-\mathrm{e}^{-\langle1-u,\mu\rangle}\bigr) \Lambda(\mathrm{d}\mu) \biggr\} .
\end{equation}
Thus,
\begin{eqnarray*}
G_\Phi[1-t(1-u)]&=&\exp \biggl\{-\int_{\M\setminus\{0\}}
\bigl(1-\mathrm{e}^{-\langle1-u,t\mu\rangle}\bigr) \Lambda(\mathrm{d}\mu) \biggr\}\\
&=&\exp \biggl\{-t^\alpha\int_{\M\setminus\{0\}}
\bigl(1-\mathrm{e}^{-\langle1-u,\mu\rangle}\bigr) \Lambda(\mathrm{d}\mu) \biggr\}
\end{eqnarray*}
by the homogeneity of $\Lambda$. Then (\ref{eq:daspgfl}) clearly
holds.

\textit{Necessity}.
By iterating \eqref{eq:thinstabdef} $m$ times we arrive at
\[
m^{-1/\alpha}\circ\Phi_1+\cdots+m^{-1/\alpha}\circ\Phi_m \deq
\Phi
\]
for \iid$\Phi,\Phi_1,\dots,\Phi_m$, implying that $\das$ point
processes are necessarily infinitely divisible and
%
\begin{equation}
\label{eq:infdiv}
 \bigl(G_\Phi[1-m^{-1/\alpha}(1-u)] \bigr)^m=G_\Phi[u]
\end{equation}
for all $m\geq2$.

The crucial step of the proof aims to show that the functional
%
\begin{equation}
\label{eq:lg}
L[u]=G_\Phi[1-u] ,\qquad   u\in\cz ,
\end{equation}
is the Laplace functional of a $\sas$ random measure $\zeta$. While
the functional $L$ on the left-hand side should be defined on all
(bounded) functions with compact supports, it is apparent that the
\pgfl$G_\Phi$ on the right-hand side of (\ref{eq:lg}) may not be
well defined on $1-u$. Indeed, in contrast to (\ref{eq:pgfldef}),
$1-u$ does not necessarily take values from the unit interval. To
overcome this difficulty, we employ (\ref{eq:infdiv}) and define
%
\begin{equation}
\label{eq:ml}
L[u]=(G_\Phi[1-m^{-1/\alpha}u])^m .
\end{equation}
Since $u\in\cz$, for sufficiently large $m$ the function
$1-m^{-1/\alpha}u$ does take values in $[0,1]$ and equals $1$ outside a
compact set. Since (\ref{eq:ml}) holds for all $m$, it is possible
to pass to the limit as $m\ti$ to see that
\[
L[u]=\exp \Bigl\{-\lim_{m\ti} m(1-G_\Phi[1-m^{-1/\alpha}u]) \Bigr\} .
\]
By the Schoenberg theorem (see, e.g., \cite{BergChriRes84}, Theorem 3.2.2) $L$ is
positive definite if $1-G_\Phi[1-m^{-1/\alpha}u]$ is negative
definite, that is,
\[
\sum_{i,j=1}^n c_ic_j \bigl(1-G_\Phi[1-m^{-1/\alpha}(u_i+u_j)]\bigr)\leq0
\]
for all $n\geq2$, $u_1,\dots,u_n\in\cz$ and $c_1,\dots,c_n$ with
$\sum c_i=0$. In view of the latter condition, we need to show the
inequality
\[
\sum_{i,j=1}^n c_ic_j G_\Phi[1-m^{-1/\alpha}(u_i+u_j)]\geq0 .
\]
Note that
\[
\lim_{m\ti} \frac{1-m^{-1/\alpha}u}{\mathrm{e}^{-m^{-1/\alpha} u}}=1 .
\]
Denote $v_i=\mathrm{e}^{-m^{-1/\alpha} u_i}$, $i=1,\dots,n$. Referring to
the continuity of the \pgfl$G_\Phi$, it suffices to check that
\[
\sum_{i,j=1}^n c_ic_j G_\Phi[v_iv_j]\geq0 ,
\]
which is exactly the positive definiteness of $G_\Phi$.

Thus, $L_\zeta[\sum_{i=1}^k t_ih_i]$ as a function of
$t_1,\dots,t_k\geq0$ is the Laplace transform of a random vector.
Arguing as in the proof of sufficiency in
Theorem \ref{thr:gen-stas}, it is easy to check the continuity of
$L$, so that $L$ is indeed the Laplace functional of a random
measure $\zeta$. Condition (\ref{eq:daspgfl}) rewritten for $L$
means that $\zeta$ is $\sas$.
\end{pf}

By Theorem \ref{th:finite}, a $\sas$ random measure $\zeta$ has
the characteristic exponent $\alpha=1$ if and only if $\zeta$ is
deterministic. Respectively, a $\das$ processes with $\alpha=1$ is
a Poisson process driven by (non-random) intensity measure $\zeta$.

Using decomposition $\Lambda=\hat{\sigma}\times\theta_\alpha$, the
first identity in \eqref{eq:pg-pp} and \eqref{eq:sd}, we obtain the
following result.

\begin{Cor}[(Spectral representation)]
\label{cor:spect}
A point process $\Phi$ is $\das$ with $\alpha\in(0,1]$ if and only
if its \pgfl has the form
%
\begin{equation}
\label{eq:pg-pp-spec}
G_{\Phi}[u]=\exp \biggl\{-\int_{\SS}
\langle1-u,\mu\rangle^\alpha\sigma(\mathrm{d}\mu) \biggr\} ,\qquad
 1-u\in\cz,
\end{equation}
for some locally finite spectral measure $\sigma$ on $\SS$ that
satisfies (\ref{eq:s-fin}).
\end{Cor}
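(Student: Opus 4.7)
The plan is to chain together the two main preceding results. By Theorem~\ref{th:coxpros}, $\Phi$ is $\das$ if and only if $\Phi=\Pi_\zeta$ for some $\sas$ random measure $\zeta$, and the Cox-process identity~(\ref{eq:coxpgfl}) gives $G_\Phi[u]=L_\zeta[1-u]$ whenever $1-u\in\cz$. So the corollary reduces to translating the characterisation of $L_\zeta$ provided by Theorem~\ref{th:finite} from Laplace-functional language into \pgfl language via the substitution $h=1-u$.

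\textbf{Sufficiency (starting from $\das$ $\Phi$).} For $\alpha\in(0,1)$, Theorem~\ref{th:finite} gives the representation~(\ref{eq:sd}) of $L_\zeta$ with a spectral measure $\sigma$ satisfying~(\ref{eq:s-fin}); setting $h=1-u$ yields~(\ref{eq:pg-pp-spec}) at once. For $\alpha=1$, Theorem~\ref{th:finite}(ii) says $\zeta=\overline{\mu}=\int_{\SSph}\mu\,\sigma(d\mu)$ is deterministic, whence
\begin{displaymath}
 L_\zeta[h]=\exp\bigl\{-\langle h,\overline{\mu}\rangle\bigr\}
 =\exp\Bigl\{-\int_{\SSph}\langle h,\mu\rangle\,\sigma(d\mu)\Bigr\},
\end{displaymath}
which is exactly the $\alpha=1$ instance of~(\ref{eq:pg-pp-spec}).

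\textbf{Necessity (starting from the spectral form).} Given any locally finite $\sigma$ on $\SSph$ satisfying~(\ref{eq:s-fin}), define $L[h]=\exp\{-\int_{\SSph}\langle h,\mu\rangle^\alpha\,\sigma(d\mu)\}$ for $h\in\cz$. The polar decomposition $\Lambda=\widehat{\sigma}\times\theta_\alpha$ described in the paragraph preceding Theorem~\ref{th:finite} shows that $L$ has the form~(\ref{eq:lf}) with a homogeneous L\'evy measure $\Lambda$ meeting~(\ref{eq:lmreg}) (when $\alpha\in(0,1)$), or that $L[h]=\exp\{-\langle h,\overline{\mu}\rangle\}$ with $\overline{\mu}$ deterministic (when $\alpha=1$). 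In either case the sufficiency part of Theorem~\ref{thr:gen-stas} identifies $L$ as the Laplace functional of a $\sas$ measure $\zeta$. The Cox process $\Pi_\zeta$ is then $\das$ by Theorem~\ref{th:coxpros}, and its \pgfl coincides with the given $G_\Phi$ via~(\ref{eq:coxpgfl}) and $h=1-u$.

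No step is a genuine obstacle: all analytical work (positive-definiteness, stability of the Laplace functional, polar decomposition of $\Lambda$) has already been carried out in Theorems~\ref{thr:gen-stas}, \ref{th:finite} and~\ref{th:coxpros}, so the corollary is essentially a change of variables $h\leftrightarrow 1-u$. The only verification worth mentioning is the equivalence of the integrability condition~(\ref{eq:s-fin}) on $\sigma$ with the local finiteness condition~(\ref{eq:lm-new}) on $\Lambda$, which is already present in the discussion leading to~(\ref{eq:s-fin}).
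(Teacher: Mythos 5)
Your proposal is correct and follows essentially the same route as the paper, which proves the corollary in one line by combining the decomposition $\Lambda=\widehat{\sigma}\times\theta_\alpha$, the identity $G_{\Pi_\zeta}[u]=L_\zeta[1-u]$ from (\ref{eq:pg-pp}), and the spectral form (\ref{eq:sd}); you merely spell out both directions and the $\alpha=1$ case explicitly.
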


The number of points $\Phi(B)$ of a $\das$
process $\Phi$ in a relatively compact Borel set $B$ has the \pgf\
%
\begin{equation}\label{eq:pgfnop}
\E s^{\Phi(B)}=\exp \biggl\{-(1-s)^\alpha\int_{\SS}
\mu(B)^\alpha\sigma(\mathrm{d}\mu) \biggr\}
\end{equation}
and so $\Phi(B)$ is either zero \as or has infinite expectation for
$0<\alpha<1$, while $\E\Phi(B)$ is finite in the Poisson case
$\alpha=1$. Furthermore, the avoidance probability is given by
\[
\P\{\Phi(B)=0\}=\exp \biggl\{-\int_{\SS}\mu(B)^\alpha\sigma(\mathrm{d}\mu
) \biggr\} ,
\]
so that (\ref{eq:s-fin}) guarantees that the avoidance probabilities
are positive. In other words, a~$\das$ point process does not have
fixed points.

Theorems \ref{th:lp} and \ref{th:coxpros} immediately imply the
following result.

\begin{Cor}[(LePage representation of a $\das$ process)]
\label{cor:lpdas}
A $\das$ process $\Phi$ with L\'evy measure $\Lambda$ can be
represented as a Cox process:
%
\begin{equation}
\label{eq:dlep1}
\Phi\deq\sum_{\mu_i\in\Psi} \Pi_{\mu_i} ,
\end{equation}
where $\Psi$ is a Poisson
process on $\M\setminus\{0\}$ with intensity measure $\Lambda$. In
particular, if its spectral measure $\sigma$ is finite, then
%
\begin{equation}
\label{eq:dlep2}
\Phi\deq\sum_{k=1}^\infty\Pi_{b\gamma_k^{-1/\alpha}\eps_k} ,
\end{equation}
where $\epsilon_k$, $\gamma_k$ and $b$ are defined in
Theorem \ref{th:lp} and the sum in (\ref{eq:dlep2}) almost surely
contains only finitely many terms.
\end{Cor}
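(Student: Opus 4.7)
The plan is to combine Theorem~\ref{th:coxpros} with Theorem~\ref{th:lp}. By Theorem~\ref{th:coxpros} the process $\Phi$ has the same distribution as a Cox process $\Pi_\zeta$ for some $\sas$ random measure $\zeta$ with L\'evy measure $\Lambda$. Theorem~\ref{th:lp} then supplies the LePage representation $\zeta\deq\sum_{\mu_i\in\Psi}\mu_i$, where $\Psi$ is a Poisson process on $\M\setminus\{0\}$ with intensity measure $\Lambda$. The remaining task is to show that the Cox process driven by an intensity that is itself a sum of random measures coincides in distribution with the superposition of the Cox processes driven by the individual summands. This passes most smoothly through the \pgfl.

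Conditionally on $\Psi$, the random measures $\mu_i$ are fixed and the Poisson superposition principle applied to the (a.s.\ locally finite) sum $\zeta=\sum_{\mu_i\in\Psi}\mu_i$ gives
\begin{displaymath}
  \E\Bigl[\exp\bigl\{\langle\log u,\,\textstyle\sum_{\mu_i\in\Psi}\Pi_{\mu_i}\rangle\bigr\}
  \,\bigm|\,\Psi\Bigr]
  =\prod_{\mu_i\in\Psi}e^{-\langle 1-u,\mu_i\rangle}
  =e^{-\langle 1-u,\zeta\rangle},
\end{displaymath}
where the $\Pi_{\mu_i}$ are conditionally independent Poisson processes with intensities $\mu_i$. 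Taking expectations and invoking (\ref{eq:coxpgfl}) yields $L_\zeta[1-u]=G_{\Pi_\zeta}[u]$, so that $\sum_{\mu_i\in\Psi}\Pi_{\mu_i}\deq\Pi_\zeta\deq\Phi$, which establishes (\ref{eq:dlep1}).

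When the spectral measure is finite, the second assertion of Theorem~\ref{th:lp} rewrites $\zeta$ as $b\sum_{k\geq1}\gamma_k^{-1/\alpha}\eps_k$, and exactly the same superposition argument, now applied to the sequence $\mu_k=b\gamma_k^{-1/\alpha}\eps_k$, produces (\ref{eq:dlep2}). To see that this sum is a.s.\ locally finite as a point process, fix a relatively compact $B\in\sB_0$ and condition on the $\gamma_k$ and $\eps_k$: the number of indices $k$ with $\Pi_{b\gamma_k^{-1/\alpha}\eps_k}(B)\geq1$ is stochastically dominated by a Poisson variable with mean $b\sum_k\gamma_k^{-1/\alpha}\eps_k(B)$, and the latter is a.s.\ finite by the vague convergence of the series in Theorem~\ref{th:lp}. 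A Borel--Cantelli argument then gives almost-sure local finiteness of the superposition.

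The only delicate point is the legitimacy of exchanging the sum over $\Psi$ with the expectation inside the \pgfl; this is justified because the Poisson superposition identity can be applied coordinate-wise on each $B\in\sB_0$, where only finitely many $\mu_i$ contribute nontrivially by the integrability condition~(\ref{eq:lmreg}) and its consequence~(\ref{eq:lm-new}). Beyond that, the corollary is a direct transcription of the LePage representation for $\sas$ random measures into the language of Cox processes via Theorem~\ref{th:coxpros}, and I do not foresee any serious obstacle.
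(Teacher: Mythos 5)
Your argument is correct and follows essentially the same route as the paper: Theorem~\ref{th:coxpros} identifies $\Phi$ with a Cox process $\Pi_\zeta$, Theorem~\ref{th:lp} supplies the LePage series for $\zeta$, and conditioning on $\Psi$ reduces \eqref{eq:dlep1} to the superposition property of Poisson processes. For the finiteness of \eqref{eq:dlep2} the paper likewise uses Borel--Cantelli, via the direct estimate $\P\{\Pi_{b\gamma_k^{-1/\alpha}\eps_k}(X)>0\mid\gamma_k\}=1-\exp\{-b\gamma_k^{-1/\alpha}\}=\mathcal{O}(k^{-1/\alpha})$; your stochastic-domination variant is equivalent, though to obtain the global (not merely local) finiteness asserted in the corollary you should run it with $B=X$, using the almost sure summability of $\gamma_k^{-1/\alpha}$.
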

\begin{pf}
By Theorem \ref{th:coxpros}, $\Phi$ is a Cox process $\Pi_\zeta$,
where $\zeta$ is representable as \eqref{eq:lepagezeta}.
Conditioned on a realisation of $\Psi$, we have that
$\Pi_{\zeta}=\sum_i \Pi_{\mu_i}$ by the superposition property of a
Poisson process. Now the statement easily follows.

The finiteness of the sum in \eqref{eq:dlep2} follows from the
Borel--Cantelli lemma, since
\[
\P\{\Pi_{b\gamma_k^{-1/\alpha}\eps_k}(X)>0 \cond  \gamma_k\}
=1-\exp\{-b\gamma_k^{-1/\alpha}\} =\mathcal{O}(k^{-1/\alpha})
\]
for almost all realisations of $\{\gamma_k\}$.
\end{pf}



\begin{Cor}[(see \cite{Dev93})]
\label{cor:dev}
A random variable $\xi$ with non-negative integer values is discrete
$\alpha$-stable if and only if $\xi$ is a mixture of
Poisson laws with parameter given by a~positive strictly stable random
variable. The \pgf of $\xi$ is given by
\[
\E s^\xi= \mathrm{e}^{-c(1-s)^\alpha} ,\qquad   u\in(0,1] ,
\]
for some $c>0$.
\end{Cor}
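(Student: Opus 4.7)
The plan is to specialise Theorem~\ref{th:coxpros} and Corollary~\ref{cor:spect} to the case where the phase space $X$ consists of a single point. Under this identification the entire point-process machinery collapses to a scalar theory: every Radon measure on $X$ has the form $c\delta$ with $c\geq 0$ (where $\delta$ is the unit mass at the lone point of $X$), and a counting measure is of the form $n\delta$ with $n\in\Z_+$. Consequently a random measure becomes a non-negative random variable, a point process becomes a non-negative integer random variable, and the p.g.fl.\ evaluated at the constant function $u\equiv s\in(0,1]$ reduces to the ordinary p.g.f.\ $\E s^\xi$. Moreover, the thinning operation~\eqref{eq:pgflthin} specialises to the scalar operation~\eqref{eq:tphi}, so that Definition~\ref{def:das} recovers the Steutel--van Harn definition~\eqref{eq:dstabdef} of a discrete $\alpha$-stable random variable.

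With these identifications in hand, I would invoke Theorem~\ref{th:coxpros} to conclude that $\xi$ is $\das$ if and only if $\xi$ is distributed as a Cox process on a one-point space driven by some $\sas$ random measure $\zeta$. Unwinding the definitions, a Cox process on a single point is simply a Poisson random variable whose (random) mean equals $\zeta$, and a $\sas$ random measure on a singleton is precisely a positive strictly $\alpha$-stable random variable (with $\alpha\in(0,1]$ by the discussion following Definition~\ref{def:stm}). This yields the first claim of the corollary.

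To obtain the explicit p.g.f., I would then appeal to Corollary~\ref{cor:spect}, or equivalently to formula~\eqref{eq:pgfnop}. The reference sphere $\SSph$ contains only the single measure $\delta$, so the spectral measure is necessarily of the form $c\delta_\delta$ for some $c\geq 0$. Substituting $\mu(B)=1$ and the constant function $u\equiv s$ into~\eqref{eq:pg-pp-spec} immediately yields $\E s^\xi=\exp\{-c(1-s)^\alpha\}$, with the degenerate case $c=0$ corresponding to $\xi\equiv 0$.

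There is no substantial obstacle: all the work has already been done in Theorem~\ref{th:coxpros} and Corollary~\ref{cor:spect}, and the only verifications needed are the routine ones that the translation between the point-process language and its scalar restriction is faithful on the level of both the thinning operation and the stability relation. Both are transparent from the definitions.
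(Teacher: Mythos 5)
Your proposal is correct and follows exactly the route the paper intends: the paper states Corollary~\ref{cor:dev} without a separate proof, treating it as the immediate specialisation of Theorem~\ref{th:coxpros} (and the spectral representation) to a one-point phase space, which is precisely your argument. The translations you note (thinning reducing to~\eqref{eq:tphi}, a Cox process on a singleton being a mixed Poisson random variable, the spectral measure being $c\delta_\delta$) are the same identifications the paper sets up in Section~\ref{sec:stab-with-resp}.
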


\begin{example}[(Discrete stable vectors, \cf Example \ref{ex:finite})]
\label{ex:fstsp}
A point process $\Phi$ on a~finite space $X$ can be described by a
random vector $(\xi_1,\dots,\xi_d)$ of dimension $d$ with
non-negative integer components. If $\Phi$ is $\das$ then
$(\xi_1,\dots,\xi_d)$ are said to form a $\das$ random vector. For
instance, if $\Phi$ is a $\das$ point process in any space $X$, then
the point counts $\Phi(B_1),\dots,\Phi(B_n)$ form a $\das$ random
vector for $B_1,\dots,B_n\in\sB_0$.

Theorem \ref{th:coxpros} implies that $\xi=(\xi_1\thru\xi_d)$ is
$\das$ if and only if its components are mixtures of Poisson random variables
with parameters $\zeta=(\zeta_1\thru\zeta_d)$ and are conditionally
independent given $\zeta$, where $\zeta$ is a strictly stable
non-negative random vector with the Laplace transform
(\ref{eq:ltrans-finite}). Notice that, in general, the components of
$\xi$ are \textit{dependent} $\das$ random variables unless the
spectral measure $\sigma$ of $\zeta$ is supported by the vertices of
the simplex $\Delta_d$ only. In view of (\ref{eq:supf}), the \pgf of
$\xi$ can be represented as
\[
\E\prod_{i=1}^d s_i^{\xi_i}
=\exp \biggl\{-\int_{\Delta_d}\langle
\one-s,\mu\rangle^\alpha\sigma(\mathrm{d}\mu) \biggr\}
=\mathrm{e}^{-H_K((\one-s)^\alpha)} ,
\]
where $s=(s_1,\dots,s_d)$ and $\one=(1,\dots,1)$. Thus, the
distribution of $\xi$ is determined by the values of the support
function of $H_K$ and its derivatives at $\one$.
\end{example}

\begin{example}[(Mixed Poisson process with stable intensity, \cf
Example \ref{ex:leb})]
\label{ex:lm}
Let $X=\R^d$ and let $\sigma$ attach a mass $c$ to the measure
$a\ell$, where $\ell$ is the Lebesgue measure
on $\R^d$ and $a>0$ is chosen so that $a\ell\in\SS$. Then
$G_\Phi[u]=\exp\{-ca^\alpha\langle
1-u,\ell\rangle^\alpha\}$, which is the \pgfl of a stationary Cox
process with a $\sas$ density, that is, a stationary Poisson process in
$\R^d$ driven by the random intensity measure
$ac^{1/\alpha}\zeta_\alpha\ell$, where $\zeta_\alpha$ is a positive
$\sas$ random variable with Laplace transform $\E
\mathrm{e}^{-z\zeta_\alpha}=\mathrm{e}^{-z^\alpha}$. This type of Cox process is also
known as a \textit{mixed Poisson process}. In particular, if
$c=a^{-1/\alpha}$, then $\E s^{-\Phi(B)}
=\mathrm{e}^{-(1-s)^\alpha\ell(B)^\alpha}$, so that
$\P\{\Phi(B)=0\}=\mathrm{e}^{-\ell(B)^\alpha}$ for all $B\in\sB_0$. Note that
this point process has an infinite intensity measure.
\end{example}



Discrete $\alpha$-stable point processes appear naturally as
limits for thinned superpositions of point processes. Let $\Psi$ be a
point process on $X$ and let $S_n=\Psi_1+\cdots+\Psi_n$ be the sum of
independent copies of $\Psi$. The \pgfl\ of the thinned point process
$a_n\circ S_n$ is given by\looseness=1
\[
G_{a_n\circ S_n}[1-h]=
 (G_\Psi[1-a_n h] )^n ,
\]
where $a_n\to0$ is a certain normalising sequence. On the other hand, the
superposition can be normalised by scaling its values as $a_n S_n$, so
that the result of scaling is a random measure, but no longer
counting. The following basic result establishes a correspondence
between the convergence of the thinned and the scaled superpositions.

\begin{Th}\label{th:clt}
Let $\{S_n$, $n\geq1\}$, be a sequence of point processes. Then for
some sequence
$\{a_n\}$, $a_n S_n$ weakly converges to a non-trivial random
measure that is necessarily $\sas$ with a spectral measure $\sigma$
if and only if $a_n\circ S_n$ weakly converges to a non-trivial
point process that is necessarily $\das$ with the same spectral
measure $\sigma$.

If $S_n$ is the sum of $n$ independent copies of a process $\Psi$,
the measure $\sigma$ can be defined by its finite-dimensional
distributions: if $\xi=(\Psi(B_1),\dots,\Psi(B_d))$ for
$B_1,\dots,B_d\in\sB_0$, then
%
\begin{equation}
\label{eq:limsig}
\sigma\bigl(\{\mu\dvtx (\mu_1,\dots,\mu_d)\in A\}\bigr)
=\Gamma(1-\alpha)\lim_{n\ti}n \P\{\xi/\|\xi\|_1\in A ;
\|\xi\|_1>a_n\}
\end{equation}
for all measurable $A$ from the unit $\ell_1$-sphere
$\{x\in\R_+^d\dvtx  \|x\|_1=1\}$.
\end{Th}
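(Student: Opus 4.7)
The plan is to couple both convergences via an analytic comparison of the Laplace functional of $a_nS_n$ and the \pgfl\ of $a_n\circ S_n$. The basic identities
\[
L_{a_nS_n}[h]=(L_\Psi[a_nh])^n,\qquad G_{a_n\circ S_n}[u]=(G_\Psi[1-a_n(1-u)])^n,
\]
together with the Cox relation $G_{\Pi_\zeta}[u]=L_\zeta[1-u]$ of \eqref{eq:coxpgfl}, reduce the statement to the pointwise equivalence
\[
(L_\Psi[a_nh])^n\to L_\zeta[h]\text{ on }\cz\quad\iff\quad (G_\Psi[1-a_n(1-u)])^n\to L_\zeta[1-u]\text{ on }\{1-u\in\cz\}.
\]
Applying the elementary estimates $e^{-x-x^2}\leq 1-x\leq e^{-x}$, valid on $[0,1/2]$, pointwise inside the $\Psi$-integrals yields the sandwich
\[
L_\Psi[a_nf]\;\geq\;G_\Psi[1-a_nf]\;\geq\;L_\Psi[a_nf+a_n^2f^2],\quad f\in\cz,
\]
for $n$ large enough. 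Since $a_n^2f^2\leq\eps\,a_nf$ on $\supp f$ for any $\eps>0$ and all large $n$, and the scaling $L_\zeta[(1+\eps)f]=L_\zeta[f]^{(1+\eps)^\alpha}$ of any $\sas$ candidate limit forces $L_\zeta[(1+\eps)f]\to L_\zeta[f]$ as $\eps\downarrow 0$, the two ends of the sandwich share a common limit whenever one of them converges. Together with the continuity theorems \cite[Th.~6.4.II]{DalVJon:88} and \cite[Prop.~3.19]{res87} for Laplace functionals and \pgfl s, this converts the analytic equivalence into the equivalence of the two weak convergences.

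That any non-trivial limit is necessarily stable follows from the standard subsequence argument: writing $S_{mn}$ as the sum of $m$ i.i.d.\ copies of $S_n$ and passing to the limit along $\{a_{mn}\}$ yields \eqref{eq:stabmes} (respectively \eqref{eq:thinstabdef}) for $t=1/m$, and then for all $t\in[0,1]$ by iteration and continuity in $t$. Non-negativity of $\zeta$ forces $\alpha\in(0,1]$, and Theorem~\ref{th:coxpros} identifies the limit of $a_n\circ S_n$ as the Cox process $\Pi_\zeta$, so the two spectral measures agree.

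Formula \eqref{eq:limsig} is obtained by finite-dimensional reduction. For disjoint $B_1,\dots,B_d\in\sB_0$ the vector $\xi=(\Psi(B_1),\dots,\Psi(B_d))$ lies in the domain of attraction of the one-sided strictly $\alpha$-stable vector $(\zeta(B_1),\dots,\zeta(B_d))$, whose L\'evy measure on $\R_+^d\setminus\{0\}$ is the vague limit of $n\P\{a_n\xi\in\cdot\}$ by the classical tail characterisation (see, \eg\ \cite{SamTaq:94}). Passing to polar coordinates on the unit $\ell_1$-simplex with $b_n=1/a_n$ gives $\widehat{\sigma}(A)=\lim_n n\P\{\xi/\|\xi\|_1\in A,\;\|\xi\|_1>b_n\}$, and multiplication by $\Gamma(1-\alpha)$ produces \eqref{eq:limsig} in the normalisation $\sigma=\Gamma(1-\alpha)\widehat\sigma$. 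Varying $(B_1,\dots,B_d)$ determines $\sigma$ on $\SSph$ through its finite-dimensional distributions.

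The main obstacle is controlling the remainder $a_n^2f^2$ in the sandwich without a priori moment hypotheses on $\Psi$: rather than estimating it directly, one exploits the scaling homogeneity of the candidate $\sas$ limit, which must therefore be established first via the subsequence argument above. A subsidiary concern is ensuring that pointwise convergence of the Laplace and generating functionals on $\cz$ lifts to weak convergence in the vague topology on $\M$ and on the space of counting measures, which is precisely supplied by the cited continuity theorems.
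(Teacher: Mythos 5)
Your proposal is correct, but it takes a genuinely different route from the paper. The paper's proof is essentially a pair of citations: the equivalence of the two convergence statements is taken from Kallenberg's theorem on thinning versus scaling limits (\cite[Th.~8.4]{Kal:83}, see also \cite[Th.~9.3.III]{DalVJon:88}), and \eqref{eq:limsig} is quoted as the standard domain-of-attraction condition from \cite[Th.~4.3]{dav:mol:zuy08} and \cite{ara:gin}. You instead give a direct analytic argument: the sandwich $e^{-x-x^2}\leq 1-x\leq e^{-x}$ applied inside $G_\Psi$ and $L_\Psi$, combined with the homogeneity $L_\zeta[sf]=L_\zeta[f]^{s^\alpha}$ of the candidate limit to absorb the quadratic remainder, which is in effect a self-contained proof of the Kallenberg equivalence in this setting and makes transparent why the driving measure of the limiting Cox process coincides with the limit of $a_nS_n$. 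The trade-off is that you must first establish stability of the limit via the subsequence argument (to have homogeneity available), and there you gloss over the convergence-of-types step showing $a_{mn}/a_n\to m^{-1/\alpha}$, i.e.\ the regular variation of the norming sequence; this is classical but is the one place where your argument leans on unstated standard theory rather than the explicit citations the paper uses. Your derivation of \eqref{eq:limsig} by finite-dimensional reduction, polar decomposition on the $\ell_1$-simplex with $b_n=1/a_n$, and the normalisation $\sigma=\Gamma(1-\alpha)\widehat\sigma$ matches the paper's intent; note only that the reference sphere $\SSph$ of Section~\ref{sec:strictly-stable-rand} is normalised by $\mu(B_{i(\mu)})=1$ rather than by $\|\cdot\|_1$, so one should invoke the paper's remark that the spectral measure may be defined on any reference sphere to reconcile the two normalisations.
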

\begin{pf}
The equivalence of the convergence statements is established in
\cite{Kal83}, Theorem~8.4; see also \cite{DalVJon08}, Theorem 11.3.III.

Finally, \eqref{eq:limsig} is the standard condition for $a_n S_n$
to have a limit that is valid in a~much more general setting than for
random measures; see \cite{davmolzuy08}, Theorem 4.3, and
\cite{aragin}. The gamma factor stems from the particular
normalisation of the spectral measure adopted in
Sec\-tion~\ref{sec:strictly-stable-rand}.
\end{pf}

If a point process $\Psi$ has a finite intensity measure, then the
strong law of large numbers applied to its values on any relatively
compact set implies that the limit of $n^{-1}S_n$ is a~%
deterministic measure, so that the limiting $\das$ process is, in fact,
Poisson. Therefore a $\das$ limit with $\alpha\in(0,1)$ is only
possible for a point process $\Psi$ with infinite intensity. If $X$
consists of one point, then we recover the result of \cite{chrsch98}
concerning a relationship between discrete stable random variables and
conventional stable laws.




\begin{remark}
Note that not all point processes can give a non-trivial limit in
the above schemes. Consider, for instance, a point process $\Psi$
defined on $X=\{1,2,\dots\}$ with the point multiplicities $\Psi(i)$
being independent discrete stable random variables with characteristic exponents
$\alpha_i=1/2+1/(2i)$, $i=1,2,\dots.$ Then $n^{-1/\alpha}\circ S_n$
for $\alpha\leq1/2$ gives null process as the limit, while for
larger $\alpha$ the limiting process is infinite on all sufficiently
large $i$.
\end{remark}

\section{Sibuya point process and cluster representation}
\label{sec:poiss-mixt-repr}

Recall that a general cluster process is defined by means of a
\textit{centre point process} $N_c$ in some phase space $Y$ and a
countable family of independent \textit{daughter point processes}
$N(\cdot|y)$ in a phase space $X$ indexed by the points of $Y$. Their
superposition in $X$ defines a \textit{cluster process}. The \pgfl
$G[h]$ of a cluster process is then a composition $G_c[G_d[h|\cdot]]$
of the centre and the daughter processes; see, for example,  \cite
{DalVJon02}, Proposition 6.3.II.

Noting \eqref{eq:pgflpois}, the form of \pgfl(\ref{eq:pg-pp})
suggests that a $\das$ process can be regarded as a cluster process
with centre processes being Poisson with intensity measure $\Lambda$
in $Y=\M\setminus\{0\}$ and daughter processes being also Poisson
parametrised by their intensity measure $\mu\in\supp\Lambda$. We will
embark on exploration of the general case in the next section, but
here we concentrate on the case when measure $\Lambda$ charges only
finite measures and give an alternative explicit cluster characterisation
of such $\das$ processes.

\begin{definition}
Let $\mu$ be a probability measure on $X$. A point process
$\Upsilon$ on $X$ defined by the p.g.fl.
%
\begin{equation}
\label{eq:sibpgfl}
G_\Upsilon[u]=G_{\Upsilon(\mu)}[u]=1-\langle
1-u,\mu\rangle^\alpha
\end{equation}
is called the \textit{Sibuya point process} with \textit{exponent}
$\alpha$ and \textit{parameter measure} $\mu$. Its distribution is
denoted by $\Sib(\alpha,\mu)$.
\end{definition}

If $X$ consists of one point, then the point multiplicity is a random
variable $\eta$ with the~p.g.f.
\[
\E z^\eta=1-(1-z)^\alpha , \qquad  z\in(0,1] .
\]
In this case we say that $\eta$ has the Sibuya distribution and denote
it by
$\Sib(\alpha)$. The Sibuya distribution corresponds to the number of
trials to get the first success in a series of Bernoulli trials with
probability of success in the $k$th trial being $\alpha/k$; see
also \cite{Dev92,Dev93} for efficient algorithms of its
simulation.

In particular, (\ref{eq:sibpgfl}) implies that $\Upsilon(B)$ with
$B\in\sB_0$ has the \pgf
\[
\E u^{\Upsilon(B)}=1-\mu^\alpha(B)(1-u)^\alpha .
\]
Note that $\Upsilon(B)$ has infinite expectation if $\mu(B)$ does not vanish.
Furthermore,
\begin{eqnarray*}
\P\{\Upsilon(B)=0\}&=&1-\mu^\alpha(B) ,\\
\P\{\Upsilon(B)=1\}&=&\alpha\mu^\alpha(B)=q_1(\alpha) \mu^\alpha
(B) ,\\
\P\{\Upsilon(X)=n\}&=&(1-\alpha) \biggl(1-\frac{\alpha}{2} \biggr)
\cdots \biggl(1-\frac{\alpha}{n-1} \biggr)  \frac{\alpha}{n} \mu
^\alpha(B)
\bydef q_n(\alpha) \mu^\alpha(B), \qquad  n\geq2 .
\end{eqnarray*}
Therefore, conditioned to be non-zero, $\Upsilon(B)$ has the Sibuya
distribution with parameter~$\alpha$ justifying the chosen name for
this process $\Upsilon$. In the terminology of \cite{chrsch98},
$\Upsilon(B)$ has $\mu^\alpha(B)$-\textit{scaled Sibuya distribution}.
In particular, $\Upsilon(X)$ is non-zero \as and follows $\Sib(\alpha)$
distribution.

Developing the \pgfl\eqref{eq:sibpgfl} makes it possible to get
insight into the structure of a Sibuya process:
\begin{eqnarray*}
G_\Upsilon[u] & =&\E\prod_{x_i\in\Upsilon} u(x_i)=1-(1-\langle
u,\mu\rangle)^\alpha
= \sum_{n=1}^\infty q_n(\alpha) \langle u,\mu\rangle^n \\
& =& \sum_{n=1}^\infty q_n(\alpha)
\int_{X^n} u(x_1)\cdots u(x_n) \mu(\mathrm{d}x_1)\cdots\mu(\mathrm{d}x_n) .
\end{eqnarray*}
Therefore, as we have already seen, the total number of points of
$\Upsilon$ follows Sibuya distribution and, given this total number,
the points are independently identically distributed according to the
distribution $\mu$. This also justifies the fact that
(\ref{eq:sibpgfl}) indeed is a \pgfl of a point process constructed
this way. This type of processes is called \textit{purely random}
in~\cite{mkm}, page 104.

Assume now that the L\'evy measure $\Lambda$ of a $\sas$ random measure
$\zeta$ is supported by finite measures. Then (\ref{eq:sd}) holds with
the spectral measure $\sigma$ defined on $\M_1$, so that the \pgfl of
the corresponding $\das$ process takes the form
%
\begin{equation}
\label{eq:pois-sib}
G_\Phi[u]=\exp \biggl\{\int_{\M_1}
\bigl(G_{\Upsilon(\mu)}[u]-1\bigr) \sigma(\mathrm{d}\mu) \biggr\}
\end{equation}
with $G_{\Upsilon(\mu)}[u]$ given by \eqref{eq:sibpgfl}. Thus we have
shown the following result.

\begin{Th}
\label{th:pois-sib}
A $\das$ point process $\Phi$ with the L\'evy measure supported by
finite measures (equivalently, with a spectral measure $\sigma$
supported by $\M_1$) can be represented as a cluster process with
a Poisson centre process on $\M_1$ driven by intensity measure
$\sigma$ and daughter processes being Sibuya processes
$\Sib(\alpha,\mu), \mu\in\M_1$. Its \pgfl is given
by \eqref{eq:pois-sib}.
\end{Th}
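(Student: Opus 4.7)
The plan is to assemble the cluster representation directly from three ingredients that are already available: the spectral form of the \pgfl of a $\das$ process, the definition of the Sibuya \pgfl, and the standard formula for the \pgfl of a Poisson cluster process. No substantive new calculation is required; the work lies entirely in identifying the three pieces correctly.

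First, I would invoke Corollary~\ref{cor:spect} to write
\begin{equation*}
  G_\Phi[u]=\exp\Bigl\{-\int_{\SSph}\langle 1-u,\mu\rangle^\alpha\,\sigma(d\mu)\Bigr\},\quad 1-u\in\cz.
\end{equation*}
The hypothesis that $\Lambda$ charges only finite measures, via the argument used in Theorem~\ref{th:finite}(i), allows the reference sphere $\SSph$ to be replaced by $\M_1$, so the integration runs over probability measures. Next, the Sibuya \pgfl~\eqref{eq:sibpgfl} gives the pointwise identity $\langle 1-u,\mu\rangle^\alpha=1-G_{\Upsilon(\mu)}[u]$ for every $\mu\in\M_1$, and substituting it into the previous display produces formula~\eqref{eq:pois-sib}.

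Second, I would recognise~\eqref{eq:pois-sib} as the \pgfl of a Poisson cluster process. If $N_c$ denotes a Poisson process on $\M_1$ with intensity measure $\sigma$ and, conditionally on $N_c=\sum_i\delta_{\mu_i}$, independent daughter processes $\Upsilon_i\sim\Sib(\alpha,\mu_i)$ are generated on $X$, then by~\cite[Prop.~8.2.II]{DalVJon:88} together with the Poisson \pgfl~\eqref{eq:pgflpois} the superposition $\sum_i\Upsilon_i$ has \pgfl
\begin{equation*}
  G_{N_c}\bigl[G_{\Upsilon(\cdot)}[u]\bigr]=\exp\Bigl\{-\int_{\M_1}\bigl(1-G_{\Upsilon(\mu)}[u]\bigr)\,\sigma(d\mu)\Bigr\},
\end{equation*}
which coincides with~\eqref{eq:pois-sib}. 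Since the \pgfl determines the distribution of a locally finite point process, this identifies $\Phi$ in distribution with the proposed cluster process.

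The one point requiring a word of justification is that the cluster superposition is indeed locally finite, a priori non-trivial because $\sigma$ need not be finite and the Sibuya counts $\Upsilon_i(B)$ are not integrable. The obstacle is sidestepped by the fact that~\eqref{eq:pois-sib} is already known from Corollary~\ref{cor:spect} to be the \pgfl of a \emph{bona fide} locally finite point process $\Phi$; matching \pgfl s then forces the cluster construction to inherit local finiteness and to agree with $\Phi$ in distribution. Thus the main step is really the bookkeeping that turns the exponent $\langle 1-u,\mu\rangle^\alpha$ into $1-G_{\Upsilon(\mu)}[u]$, and the rest is a direct reading of the Poisson cluster formula.
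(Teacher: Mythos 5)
Your proposal is correct and follows essentially the same route as the paper: the paper also obtains Theorem~\ref{th:pois-sib} by restricting the spectral representation to $\M_1$, rewriting $\langle 1-u,\mu\rangle^\alpha$ as $1-G_{\Upsilon(\mu)}[u]$ to arrive at \eqref{eq:pois-sib}, and reading off the Poisson cluster structure. Your closing remark on local finiteness is a small extra justification the paper leaves implicit, but it does not change the argument.
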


As a by-product, we have established the following fact: Since Sibuya
processes are finite with probability 1, the cluster process is
finite or infinite depending on the finiteness of the Poisson
processes of centres.

\begin{Cor}
\label{cor:finitedas}
A $\das$ point process is finite if and only if its spectral measure
$\sigma$ is finite and is supported by finite measures.
\end{Cor}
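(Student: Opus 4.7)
The plan is to derive the corollary as an immediate consequence of the cluster representation in Theorem~\ref{th:pois-sib} for sufficiency, and of the Cox representation from Theorem~\ref{th:coxpros} combined with Theorem~\ref{th:finite}(i) for necessity. Neither direction should require new technical machinery.

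For the \emph{if} direction, assume that $\sigma$ is finite on $\SSph$ and supported by $\M_1$. Theorem~\ref{th:pois-sib} applies and represents $\Phi$ as a cluster process whose centres form a Poisson process on $\M_1$ with intensity $\sigma$, and whose daughter processes are $\Sib(\alpha,\mu)$. Since $\sigma(\M_1)<\infty$, the centre process has almost surely finitely many points. Each Sibuya daughter $\Upsilon$ satisfies $\Upsilon(X)<\infty$ almost surely (its total mass follows the Sibuya distribution, which was shown above to be an honest discrete law). Hence $\Phi$ is an almost surely finite superposition of almost surely finite point processes, so $\Phi(X)<\infty$ almost surely.

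For the \emph{only if} direction, suppose $\Phi(X)<\infty$ almost surely. By Theorem~\ref{th:coxpros}, $\Phi\deq \Pi_\zeta$ for some $\sas$ random measure $\zeta$. Conditionally on $\zeta$, $\Phi(X)$ is Poisson with parameter $\zeta(X)$, which is finite iff $\zeta(X)$ is finite. Thus $\P\{\zeta(X)<\infty\}=1$, i.e., $\zeta$ is almost surely locally finite with finite total mass. Theorem~\ref{th:finite}(i) then forces $\Lambda$ (equivalently $\sigma$) to be supported by finite measures and $\sigma(\M_1)=\sigma(\SSph)<\infty$.

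I expect no serious obstacle: the only mild point to make carefully is the equivalence $\{\Phi(X)<\infty\text{ a.s.}\}\Leftrightarrow\{\zeta(X)<\infty\text{ a.s.}\}$ for a Cox process, which is immediate from the fact that a Poisson variable with infinite mean equals $+\infty$ almost surely. Everything else is invocation of previously established results.
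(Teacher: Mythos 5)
Your proposal is correct and follows essentially the same route as the paper: the sufficiency is exactly the paper's argument (a.s.\ finitely many Poisson cluster centres, each with an a.s.\ finite Sibuya daughter process from Theorem~\ref{th:pois-sib}). For necessity the paper only gestures at the cluster picture, whereas you spell out the cleaner reduction via the Cox representation of Theorem~\ref{th:coxpros} and Theorem~\ref{th:finite}(i); this makes explicit the case where $\sigma$ charges infinite measures, which the paper leaves implicit, but it is the same underlying mechanism.
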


If $\alpha=1$, then the Sibuya process consists of one point
distributed according to $\mu$ and the cluster process represents a
Poisson process with intensity measure
$\overline{\mu}(\cdot)=\int_{\M_1} \mu(\cdot)\sigma(\mathrm{d}\mu)$,
which is
clearly discrete 1-stable.

By Corollary \ref{cor:dev}, $\xi$ is discrete $\alpha$-stable if and
only if $\xi$ can be represented as a sum of a Poisson number of
independent Sibuya distributed random variables, which is proved for
discrete stable laws in \cite{Dev93}.

Later on we make use of Theorem \ref{th:pois-sib} for the case of an
infinite countable phase space~$X$. The finite case is considered
below.\vspace{-3pt}

\begin{definition}
Let $\mu=(\mu_1\thru\mu_d)$ be a $d$-dimensional probability
distribution. A
random vector $\Upsilon$ has \textit{multivariate Sibuya distribution}
with parameter measure $\mu$ and exponent $\alpha\in(0,1]$ if its
\pgf has
the following form:\vspace{-5pt}
%
\begin{equation}
\label{eq:mvsib}
\E\prod_{n=1}^d z_n^{\Upsilon_n}=1- \Biggl(\sum_{n=1}^d
(1-z_n)\mu_n \Biggr)^\alpha .\vspace{-5pt}
\end{equation}
\end{definition}

If $d=1$ and $\mu=1$, then the multivariate Sibuya distribution
becomes the ordinary Sibuya distribution with exponent $\alpha$. For
$d\geq2$, the marginals $\Upsilon_n$ of a multivariate Sibuya vector
$\Upsilon=(\Upsilon_1\thru\Upsilon_d)$ have the \pgf given by\vspace{-5pt}
\[
\E z_n^{\Upsilon_n}=1-\mu_n(1-z_n)^\alpha,\qquad  n=1\thru d .\vspace{-3pt}
\]
Thus $\Upsilon_n$ takes value 0 with probability $1-\mu_n$ but,
conditional on being non-zero, it is $\Sib(\alpha)$-distributed.\vspace{-3pt}

\begin{example}[($\das$ random vectors)]
\label{ex:das-vectors}
Let $\xi=(\xi_1,\dots,\xi_d)$ be a $\das$ vector.
By Theorem \ref{th:pois-sib}, it can be represented as a sum of
multivariate Sibuya $\Sib(\alpha,\mu_i)$ vectors, where~$\mu_i$ are
chosen from a finite Poisson point process on $\Delta_d$ with some
intensity measure $\sigma$.\vspace{-3pt}
\end{example}

\begin{example}[(Stationary $\das$ processes)]
\label{ex:gauss}
As in Example \ref{ex:strm}, let $X=\R^d$ with $\nu$ being
proportional to the Lebesgue measure and $\mu$ being the uniform
distribution on the unit ball centred at the origin. Then the
corresponding $\das$ process is a cluster process that can be
described by the following procedure. First, the Boolean model of
unit balls in $\R^d$ is generated with the centres following the
Poisson point process with intensity measure~$\nu$; see \cite{skm}.
Then a $\Sib(\alpha)$ number of points is thrown into each such ball
uniformly and independently from the other balls. The set of thus
generated points is a realisation of the $\das$ process.

In a more general model, the uniform distribution on the unit ball
can be replaced by any probability distribution kernel $P(\mathrm{d}y,x)$,
$x\in X$. A typical realisation of such a~process in $\R^2$ with
$\nu$ being proportional to the Lebesgue measure on $[0,1]^2$ and
$P(\mathrm{d}y,x)$ being the Gaussian distribution centred at $x$ with \iid\
components of a certain variance~$s^2$ is presented in
Figure \ref{fig:das}. The avoidance probabilities of the obtained
point process are given by\vspace{-5pt}
\[
\P\{\Phi(B)=0\}=\exp \biggl\{-\int_{\R^d}
 \bigl(\mu_0\bigl(s^{-1}(B+x)\bigr) \bigr)^\alpha \,\mathrm{d}x \biggr\} ,\vspace{-3pt}
\]
where $\mu_0$ is the standard Gaussian measure in $\R^2$.

\begin{figure}

\includegraphics{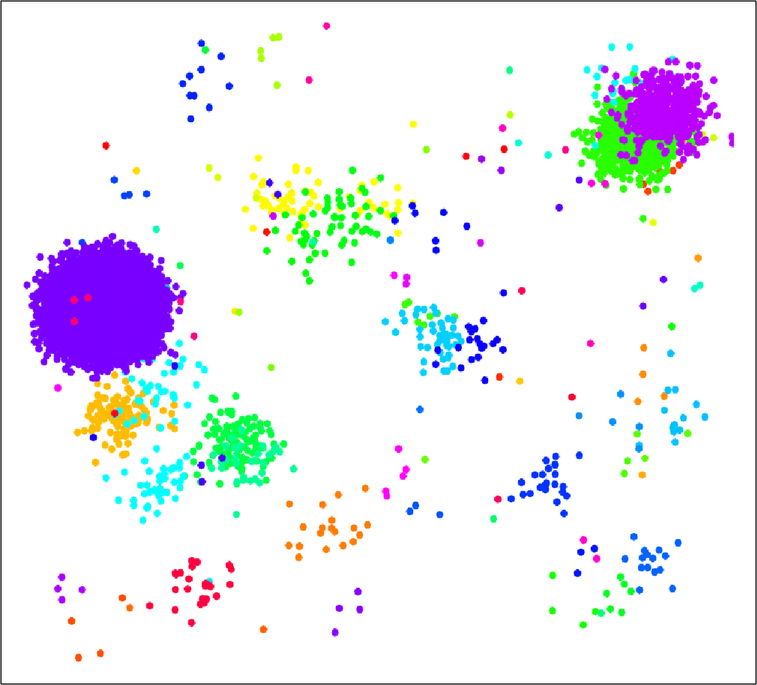}
\vspace*{-3pt}
\caption{A realisation of a $\das$ stationary process with the
Gaussian kernel. Different shades of gray correspond to
different clusters.}
\label{fig:das}\vspace*{-5pt}
\end{figure}

Figure \ref{fig:das} shows that realisations of such a process are
highly irregular. Many clusters with small numbers of points appear
alongside huge clusters, so that the resulting point process
has an infinite intensity measure. In view of this, $\das$ point
processes can help to model point patterns with point counts
exhibiting heavy-tail behaviour.
\end{example}

\section{Regular and singular $\das$ processes}
\label{sec:infinite-das-point}

We have shown in the proof of Theorem \ref{th:coxpros} that a $\das$
point process is necessarily infinitely divisible. It is known
(\seg \cite{DalVJon08}, Theorem 10.2.V) that the \pgfl of an infinitely
divisible point process admits the following representation
%
\begin{equation}
\label{eq:klm}
G_\Phi[u]=\exp \biggl\{ \int_{\sN_0}  \bigl[\mathrm{e}^{\langle\log
u,\phi\rangle} -1  \bigr] \Q(\mathrm{d}\phi) \biggr\} ,
\end{equation}
where $\sN_0$ is the space of locally finite non-empty point
configurations on $X$ and $\Q$ is a~locally finite measure on it
satisfying
%
\begin{equation}
\label{eq:klmreg}
\Q \bigl(\{\phi\in\sN_0\dvtx \phi(B)>0\} \bigr)<\infty \qquad \mbox{for
all }
B\in\sB_0(X) .
\end{equation}
This measure $\Q$ is called the \textit{KLM measure} (\textit{canonical
measure} in the terminology of~\cite{mkm}) 
and it is uniquely defined for a given infinitely divisible point
process $\Phi$. Such point process $\Phi$ is called \textit{regular} if
its KLM measure is supported by the set $\{\phi\in\sN_0\dvtx
\phi(X)<\infty\}$; otherwise it is called \textit{singular}.

It is easy to see that the expression \eqref{eq:pg-pp} for the \pgfl of
$\das$ process combined with~\eqref{eq:pgflpois} has the form
\eqref{eq:klm} with
%
\begin{equation}\label{eq:q}
\Q(\cdot)=\int_{\M\setminus\{0\}} \P_\mu(\cdot) \Lambda(\mathrm{d}\mu
) ,
\end{equation}
where $\P_\mu$ is the distribution of a Poisson point process driven
by intensity measure~$\mu$. Moreover, the requirement
\eqref{eq:klmreg} is exactly the property \eqref{eq:lmreg} of the
L\'evy measure with~$h=\ind_B$. The Sibuya cluster representation of
$\das$ processes from Section \ref{sec:poiss-mixt-repr} arises from~%
\eqref{eq:q} by integrating out the radial component in the polar
decomposition $\M_1\times\R_+$, where $\Lambda$ is concentrated in the
regular case.

The following decomposition result is inherited from
the corresponding decomposition known for infinitely divisible
processes; see, for example, \cite{DalVJon08}, Theorem 10.2.VII. Let
$\M_f=\{\mu\in\M\setminus\{0\}\dvtx  \mu(X)<\infty\}$ and
$\M_\infty=\{\mu\in\M\dvtx  \mu(X)=\infty\}$.

\begin{Th}
\label{th:decomp}
A $\das$ point process with L\'evy measure $\Lambda$ and spectral
measure $\sigma$ can be represented as a superposition of two
independent infinitely divisible processes: a regular and a singular
one. The regular process is a cluster process having a \pgfl given
by \eqref{eq:pois-sib} with spectral measure
$\sigma|_{\M_1}=\sigma(\cdot\ind_{\M_1})$ and the singular process
is a cluster process with a \pgfl given by \eqref{eq:pg-pp}
corresponding to L\'evy measure $\Lambda|_{\M_\infty}$ (or by
\eqref{eq:pg-pp-spec} with spectral measure
$\sigma|_{\SS\setminus\M_1}$).
\end{Th}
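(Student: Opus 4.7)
The plan is to decompose the driving $\sas$ random measure from the Cox representation (Theorem~\ref{th:coxpros}), invoke the LePage representation (Theorem~\ref{th:lp}) for each piece, and then identify the regular/singular parts via the KLM form~\eqref{eq:q}. The key algebraic observation is that the scaling $\mu\mapsto t\mu$ preserves both $\M_f$ and $\M_\infty$, so the homogeneity $\Lambda(tA)=t^{-\alpha}\Lambda(A)$ is inherited by $\Lambda_f=\Lambda|_{\M_f}$ and $\Lambda_\infty=\Lambda|_{\M_\infty}$. Both restrictions automatically satisfy~\eqref{eq:lmreg} because the integrand there is non-negative. Under the polar decomposition $\M=\SSph\times\R_+$ these two pieces correspond respectively to the spectral measures $\sigma|_{\M_1}$ and $\sigma|_{\SSph\setminus\M_1}$, since $\M_1=\SSph\cap\M_f$ and scaling preserves finiteness of total mass.

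Next I would apply Theorem~\ref{th:lp} independently to $\Lambda_f$ and $\Lambda_\infty$ to construct two $\sas$ random measures $\zeta_{\mathrm{reg}}$ and $\zeta_{\mathrm{sing}}$ realised as sums over independent Poisson processes $\Psi_f$ and $\Psi_\infty$ on $\M_f$ and $\M_\infty$ with intensities $\Lambda_f$ and $\Lambda_\infty$. Since $\Psi_f+\Psi_\infty$ is Poisson with intensity $\Lambda$, it follows that $\zeta\deq\zeta_{\mathrm{reg}}+\zeta_{\mathrm{sing}}$ with independent summands. Conditioning on the two realisations and using the superposition property of Poisson processes then yields $\Pi_\zeta\deq\Pi_{\zeta_{\mathrm{reg}}}+\Pi_{\zeta_{\mathrm{sing}}}$ as an independent superposition, and each summand is $\das$ by Theorem~\ref{th:coxpros} with the predicted spectral measure.

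It remains to verify the regular/singular nature of the two pieces. For $\Pi_{\zeta_{\mathrm{reg}}}$, Theorem~\ref{th:pois-sib} produces the cluster representation~\eqref{eq:pois-sib} whose daughter processes are Sibuya; each such daughter is almost surely finite, so the KLM measure~\eqref{eq:q} is carried by $\{\phi\in\sN_0:\phi(X)<\infty\}$ and the process is regular. For $\Pi_{\zeta_{\mathrm{sing}}}$, formula~\eqref{eq:q} with $\Lambda_\infty$ in place of $\Lambda$ shows that the KLM measure is obtained by integrating Poisson laws $\P_\mu$ with $\mu(X)=\infty$; each such law charges only configurations with infinitely many points, so the KLM measure sits on $\{\phi:\phi(X)=\infty\}$, proving singularity.

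The main technical hurdle is establishing the two bookkeeping facts that legitimise the $\Lambda_f/\Lambda_\infty$ splitting, namely that scaling preserves the partition $\M_f\sqcup\M_\infty$ and that~\eqref{eq:lmreg} is inherited by each restriction; once these are in hand, the rest reduces to direct appeals to the Cox, LePage and Sibuya-cluster representations already established earlier.
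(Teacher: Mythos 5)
Your proposal is correct and follows essentially the same route as the paper: the heart of both arguments is the orthogonal splitting $\Lambda=\Lambda|_{\M_f}+\Lambda|_{\M_\infty}$ (legitimate because scaling preserves finiteness of total mass and \eqref{eq:lmreg} passes to restrictions) together with the observation that in \eqref{eq:q} the Poisson law $\P_\mu$ charges only finite or only infinite configurations according to whether $\mu(X)$ is finite. The paper simply factors the \pgfl\ and the KLM measure directly, whereas you realise the two pieces pathwise via the LePage and Cox superposition constructions; this detour is harmless but not needed.
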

\begin{pf}
Represent $\Lambda$ as the sum $\Lambda|_{\M_f}+
\Lambda|_{\M_\infty}$ of two orthogonal measures: $\Lambda$
restricted on the set $\M_f$ and on $\M_\infty$. Then $\Q$ also
decomposes into two orthogonal measures: the one concentrated on
finite configurations and the one concentrated on infinite point
configurations (because the corresponding Poisson process $\Pi_\mu$
is finite or infinite with probability~1, correspondingly). Finally,
$G_\Phi[u]$ in \eqref{eq:klm} separates into the product of p.g.fl.'s corresponding to the cluster process studied in the
previous section and the second one with p.g.fl. \eqref{eq:pg-pp} with
$\Lambda$ replaced by $\Lambda|_{\M_\infty}$.
\end{pf}

Recall that a measure $\mu$ is called \textit{diffuse} if $\mu(\{x\})=0$
for any $x\in X$. Representation~\eqref{eq:q} of the KLM measure
immediately gives rise to the following results analogous to the
properties of infinitely divisible processes.
\begin{Th}\label{th:dasprops}
A $\das$ point process $\Phi$ with L\'evy measure $\Lambda$ and spectral
measure $\sigma$ is
\begin{itemize}[(iii)]
\item[(i)] Simple if and only if $\Lambda$ (resp.,
$\sigma$) is supported by diffuse measures,
\cf\cite{mkm}, Proposition~2.2.9. 
%
\item[(ii)] Independently scattered if and only if
$\sigma$ is supported by the set $\{\delta_x\dvtx x\in X\}\subset
\SS$ of Dirac measures, \cf\cite{mkm}, Proposition 2.2.13.
%
\item[(iii)] The distribution $\P_1$ of a $\das$ process
$\Phi_1$ with spectral measure $\sigma_2$ is absolutely continuous
with respect to the distribution $\P_2$ of another $\das$ process
$\Phi_2$ with spectral measure $\sigma_2$ and the same $\alpha$ if and
only if there exists a measurable set $A\subseteq
\M_\infty\cap\SS=\SS_\infty$ such that
$\sigma_1|_{\SS_\infty}=\sigma_2|_{A\cap\SS_\infty}$ and
$\sigma_1|_{\M_1}\ll\sigma_2|_{\M_1}$.
\end{itemize}
\end{Th}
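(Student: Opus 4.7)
The unifying strategy for all three parts is to invoke Theorem~\ref{th:coxpros}, which represents a $\das$ point process as a Cox process $\Pi_\zeta$ driven by a $\sas$ random measure $\zeta$, and then translate structural properties of $\Phi$ into properties of the L\'evy measure $\Lambda$ or the spectral measure $\sigma$ via the LePage representation of Theorem~\ref{th:lp} and the polar decomposition $\Lambda=\widehat\sigma\otimes\theta_\alpha$.

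For (i), recall that a Poisson process with intensity $\mu$ is simple almost surely if and only if $\mu$ is diffuse. Conditioning on $\zeta$ shows that $\Pi_\zeta$ is simple a.s.\ if and only if $\zeta$ is a.s.\ diffuse. By Theorem~\ref{th:lp}, $\zeta=\sum_{\mu_i\in\Psi}\mu_i$ where $\Psi$ is a Poisson process on $\M\setminus\{0\}$ with intensity $\Lambda$; since a countable sum of Radon measures is diffuse precisely when every summand is, $\zeta$ is a.s.\ diffuse if and only if $\Lambda$-a.e.\ measure is diffuse. Scale invariance of diffuseness gives the same statement for $\sigma$.

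For (ii), the Cox identity (\ref{eq:coxpgfl}) gives, for disjoint $B_1,\ldots,B_n\in\sB_0$ and $s_i\geq 0$,
\begin{displaymath}
  \E\exp\Bigl(-\sum_i s_i\Phi(B_i)\Bigr)
  =\E\exp\Bigl(-\sum_i(1-e^{-s_i})\zeta(B_i)\Bigr).
\end{displaymath}
As $(s_1,\ldots,s_n)$ ranges over $\R_+^n$, the vector $(1-e^{-s_1},\ldots,1-e^{-s_n})$ ranges over $[0,1)^n$, and by analyticity of Laplace transforms factorization on this product set forces factorization on $\R_+^n$. Hence $\Phi$ is independently scattered if and only if $\zeta$ is, and Theorem~\ref{th:indep} completes the argument by identifying the latter with $\sigma$ being supported on Dirac measures.

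Part (iii) is the main obstacle. The plan is to combine Theorem~\ref{th:decomp}, which writes each $\Phi_i$ as an independent superposition of a regular component $\Phi_i^{(r)}$ (with spectral measure $\sigma_i|_{\M_1}$) and a singular component $\Phi_i^{(s)}$ (with spectral measure $\sigma_i|_{\SSph_\infty}$), with the classical KLM characterisation of absolute continuity for infinitely divisible point processes. Independence of the two components gives $\P_i=\P_i^{(r)}\otimes\P_i^{(s)}$, so $\P_1\ll\P_2$ splits into $\P_1^{(r)}\ll\P_2^{(r)}$ and $\P_1^{(s)}\ll\P_2^{(s)}$. For the regular part, Theorem~\ref{th:pois-sib} realises $\Phi_i^{(r)}$ as a Poisson cluster process on $\M_1$ with centre intensity $\sigma_i|_{\M_1}$ and Sibuya daughter law depending only on the common $\alpha$; absolute continuity of such cluster processes therefore reduces to absolute continuity of the Poisson centre processes, i.e.\ to $\sigma_1|_{\M_1}\ll\sigma_2|_{\M_1}$. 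For the singular part, the KLM measures $\Q_i^{(s)}=\int_{\M_\infty}\P_\mu\,\Lambda_i(d\mu)$ concentrate on infinite configurations; matching these KLM measures as required for $\P_1^{(s)}\ll\P_2^{(s)}$, together with the uniqueness of the polar decomposition $\Lambda_i^{(s)}=\widehat\sigma_i|_{\SSph_\infty}\otimes\theta_\alpha$, forces $\sigma_1|_{\SSph_\infty}$ to coincide with $\sigma_2|_{\SSph_\infty}$ on some measurable set $A\subseteq\SSph_\infty$ and to vanish off $A$. The technically delicate step, and the hardest part of the proof, is carrying out this last translation rigorously, exploiting the interplay between the radial factor $\theta_\alpha$ and the directional measure on $\SSph_\infty$ to ensure that the stated restriction condition is both necessary and sufficient.
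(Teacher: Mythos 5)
The paper itself prints no proof of this theorem: it asserts that all three statements follow from the KLM representation \eqref{eq:q} together with the classical results for infinitely divisible point processes cited from \cite{m:k:m}. Your arguments for \textsl{(i)} and \textsl{(ii)} supply the details the paper omits and are correct: conditioning the Cox process on $\zeta$ and using the LePage series $\zeta=\sum_{\mu_i\in\Psi}\mu_i$ for \textsl{(i)}, and the identity $\E e^{-\sum s_i\Phi(B_i)}=\E e^{-\sum(1-e^{-s_i})\zeta(B_i)}$ plus analytic continuation of the Laplace transforms from $[0,1)^n$ to $\R_+^n$ combined with Theorem~\ref{th:indep} for \textsl{(ii)}, are sound (modulo the restriction $\alpha\in(0,1)$ that Theorem~\ref{th:indep} carries; for $\alpha=1$ the process is Poisson and the spectral measure is not even uniquely determined).

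Part \textsl{(iii)}, however, has a genuine gap at its very first step. The law $\P_i$ of the superposition $\Phi_i^{(r)}+\Phi_i^{(s)}$ is the \emph{convolution} of $\P_i^{(r)}$ and $\P_i^{(s)}$ on the space of counting measures (the push-forward of the product law under addition), not the product law itself: from a realisation of the superposition one cannot measurably recover which points came from which component. Hence, while $\P_1^{(r)}\ll\P_2^{(r)}$ together with $\P_1^{(s)}\ll\P_2^{(s)}$ does imply $\P_1\ll\P_2$ (sufficiency is fine), the converse splitting --- which is exactly what the necessity direction of \textsl{(iii)} needs --- does not follow from writing ``$\P_i=\P_i^{(r)}\otimes\P_i^{(s)}$'', because absolute continuity of convolutions does not descend to the factors. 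The necessity direction must instead go through the uniqueness of the KLM measure $\Q_i$ of the infinitely divisible process $\Phi_i$ and the known criterion for absolute continuity of infinitely divisible point processes in terms of their KLM measures; one then has to translate conditions on $\Q_i=\int\P_\mu\,\Lambda_i(d\mu)$ into conditions on $\Lambda_i$, which requires analysing when the Poisson laws $\P_\mu$, $\P_{\mu'}$ for distinct $\mu,\mu'$ are mutually singular or equivalent (this is where the dichotomy between $\M_1$ and $\M_\infty$ in the stated condition actually originates). You explicitly leave this translation unproved (``the technically delicate step \dots is carrying out this last translation rigorously''), and the claim that absolute continuity of the regular cluster processes ``reduces to'' absolute continuity of the Poisson centre processes is likewise only asserted in the direction needed for necessity. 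As written, part \textsl{(iii)} is a plan with a flawed opening move rather than a proof.
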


If $X$ is compact, then all $\das$ processes are regular. In the
settings of Example \ref{ex:stdas}, $\Phi$ is singular if and only if
$\mu$ is infinite.

Consider the practically important case of stationary $\das$ processes.
\begin{Th}
A stationary $\das$ process $\Phi$ in $\R^d$ with spectral measure
$\sigma$ is:
\begin{itemize}[(ii)]
\item[(i)] Mixing if and only if
\[
\sigma\{\mu\dvtx \mu(B+x)\not\to0 \mbox{ as $\|x\|\ti$ for some
$B\in\sB_0$}\}=0 .
\]
\item[(ii)] Ergodic (or weak mixing) if and only if
\[
n^{-d} \int_{[-{n}/{2},{n}/{2}]^d}
\bigl(1-\mathrm{e}^{-\mu(B+x)}\bigr)\,\mathrm{d}x \to0  \qquad \mbox{as }  n\ti
\]
for all $B\in\sB_0$ and all $\mu$ from the support of $\sigma$.
\end{itemize}
Regular stationary $\das$ processes are mixing and ergodic.
\end{Th}

\begin{pf}
According to \cite{DalVJon08}, Proposition 12.4.V, $\Phi$ is mixing if
and only if for all $A,B\in\sB_0$
\[
\Q\{\phi\in\sN_0\dvtx \phi(A)>0 \mbox{ and } \phi(B+x)>0\}\to0
\qquad \mbox{as }  \|x\|\ti .
\]
%
In view of \eqref{eq:q}, choosing sufficiently large $\|x\|$ such that
$A\cap(B+x)=\varnothing$, this condition writes
\[
\int_{\M\setminus\{0\}}\bigl(1-\mathrm{e}^{-\mu(A)}\bigr)\bigl(1-\mathrm{e}^{-\mu(B+x)}\bigr) \Lambda
(\mathrm{d}\mu)\to0 .
\]
By (\ref{eq:lmreg}) and the dominated convergence theorem, $\Phi$ is
mixing if and only if $\mu(B+x)\to0$ as $\|x\|\ti$ for all
$B\in\sB_0$ and all $\mu$ from the support of the spectral measure
$\sigma$. This is clearly the case if $\sigma$ is supported by finite
measures, that is,\ for regular $\das$ processes. Similarly, the ergodicity
condition follows from \cite{DalVJon08}, Proposition 12.4.V.
\end{pf}

%
\begin{example}[(Regular stationary $\das$ processes on $\R^d$)]
\label{ex:stdas}
Consider a $\sas$ random measure $\zeta$ from
Example \ref{ex:statrm}. Using the notation from this example,
the corresponding $\das$ process $\Phi$ has the \pgfl
\[
G_\Phi[u]=\exp \biggl\{-\int_{\M_1^0}\int_{\R^d} \biggl(\int_\R
\bigl(1-u(x+y)\bigr)\mu(\mathrm{d}x) \biggr)^\alpha
\,\mathrm{d}y \,\sigma^0(\mathrm{d}\mu) \biggr\} ,
\]
where $\M_1^0$ is the set of `centred' probability measures and
$\sigma^0$ is the measure on it arising from the Haar factorisation
of the spectral measure $\sigma=\ell\times\sigma^0$.

The avoidance probabilities for $\Phi$ are given by
\[
\P\{\Phi(B)=0\}=\exp \biggl\{-\int_{\M_1^0}\int_{\R^d}
 \bigl(\mu(B-y) \bigr)^\alpha \,\mathrm{d}y\, \sigma^0(\mathrm{d}\mu) \biggr\} .
\]
The simplest case is when $\sigma^0$ is concentrated on a single
measure $\mu$. We then obtain a~$\das$ process corresponding to the
$\sas$ process from Examples \ref{ex:strm} and \ref{ex:gauss} with
$\nu$ being the Lebesgue measure $\ell$.
\end{example}

\section{Discrete stability for semigroups}
\label{sec:settings}

Let $(S,\oplus)$ be an Abelian semigroup with binary operation
$\oplus$ and the neutral element~$\neutral$. We require that $S$
possesses a (Hamel) basis $X\subset S\setminus\{\neutral\}$ such that
each element $s\in S\setminus\{\neutral\}$ is \textit{uniquely}
represented by a finite linear combination with positive integer
coefficients
%
\begin{equation}
\label{eq:basis}
s=n_1x_1\oplus\cdots\oplus n_k x_k ,
\end{equation}
where $\{x_1,\ldots,x_k\}\subseteq X$. Here and below, $ns$ stands for
the sum $s\oplus s \oplus\cdots\oplus s$ of $n\geq0$ identical elements
$s\in S$ with convention $0s=\neutral$.
Equip $X$ with the topology that makes it a locally compact second
countable space; for instance the discrete topology if $X$ is at most
countable. Note that $\{x_1,\dots,x_k\}$ with multiplicities
$\{n_1,\dots,n_k\}$ can be regarded as a finite counting measure on~$X$, so that (\ref{eq:basis}) establishes a bijection $\mathcal{I}$
between $S$ and the family of finite counting measures on~$X$. Define
a $\sigma$-algebra on $S$ as the inverse image of the $\sigma$-algebra
on the space of counting measures under the map $\mathcal{I}$. An $S$-valued random element $\xi$ is defined with respect to the
constructed $\sigma$-algebra. Then the corresponding counting measure
$\Phi(\xi)$ becomes a point process on~$X$.

The sum of random elements in $S$ is defined directly by the
$\oplus$-addition. The multiplication $t\circ\xi$ of a random element
$\xi$ by $t\in(0,1]$ is defined as the random element that has the
basis decomposition $t\circ\Phi(\xi)$, the latter obtained by
independent thinning of $\Phi(\xi)$. By Theorem \ref{th:thinprop} and
the uniqueness of the representation, the corresponding operation
satisfies $t_1\circ(t_2\circ\xi)\deq(t_1t_2)\circ\xi$ for
$t_1,t_2\in(0,1]$ and
%
\begin{equation}
\label{eq:xi-distr}
t\circ(\xi\oplus\xi')\deq(t\circ\xi)\oplus(t\circ\xi')
\end{equation}
for independent $\xi$ and $\xi'$. Note also that
$t\circ\neutral=\neutral$ for all $t\in(0,1]$.

The imposed distributivity property (\ref{eq:xi-distr}) is essential
to define the stability property on a semigroup (even for
deterministic $\xi$ and $\xi'$), and so it also rules out the
possibility of extending the scaling operation to the cases where the
decomposition (\ref{eq:basis}) is not unique or the coefficients are
allowed to be negative. For instance, assume that $a\oplus b=\neutral$
for some non-trivial $a,b\in S$, which is the case if $S$ is a group.
Then $t\circ(a\oplus b)=\neutral$, while $t\circ a\oplus t\circ b$ is
the sum of two non-trivial independent random elements and so it
cannot be $\neutral$ almost surely. This observation explains why the
classical notion of discrete stability cannot be extended to the class
of all integer-valued random variables.

Similar difficulties arise when defining discrete stability for the
max-scheme being an example of an idempotent semigroup. Indeed, if $S$
is the family of non-negative integers with maximum as the semigroup
operation, then $a\oplus a=a$. After scaling by $t\in(0,1)$ we obtain
that $t\circ a$ is the maximum of two independent random elements
distributed as $t\circ a$, which is impossible.

\begin{definition}
An $S$-valued random element $\xi$ is called \textit{discrete stable
with exponent} $\alpha$ (notation: $\das$) if
%
\begin{equation}
\label{eq:defstabs}
t^{1/\alpha}\circ\xi'\oplus(1-t)^{1/\alpha}\circ\xi''\deq
\xi
\end{equation}
for any $t\in[0,1]$, where $\xi'$, $\xi''$ and $\xi$ are independent
identically distributed.
\end{definition}

Thus, $\xi$ is $\das$ if and only if the corresponding point process
$\Phi(\xi)$ is $\das$; see Definition~\ref{def:das}. Thus, the
distributions of $\das$ random elements in $S$ are characterised by
Theorem \ref{th:coxpros}. Since the basis decomposition
(\ref{eq:basis}) is finite, $\Phi(\xi)$ is a finite point process,
meaning that the spectral measure $\sigma$ is finite and supported by
the set $\M_1$ of probability measures on $X$; see
Corollary \ref{cor:finitedas}.



A random element $\upsilon$ in $S$ is said to have Sibuya distribution
if $\Phi(\upsilon)$ is a Sibuya point process on
$X$. Theorem \ref{th:pois-sib} and the LePage representation from
Corollary \ref{cor:lpdas} immediately imply the following
result.

\begin{Th} A random element $\xi$ in $S$ is $\das$ if and only if
$\xi$ can be represented as the sum of a Poisson number of \iid\
Sibuya random elements. Alternatively, $\xi$ is $\das$ if and only
if it can be represented as an $a.s.$-finite sum $\sum_{i\geq1}
b\gamma_k^{-1/\alpha}\circ\eps_i$, where $\gamma_k,\ \eps_k$ and
$b$ are defined in (\ref{eq:lp}).
\end{Th}

When the basis $X$ is finite, (\ref{eq:basis}) establishes a
homomorphism between $S$ and $\Z_+^d$, that is,\ the family of
$d$-dimensional vectors with non-negative integer components and
addition as the semigroup operation. Thus, all random elements in
semigroups with a~finite basis can be treated as random vectors
in $\Z_+^d$; see Example \ref{ex:das-vectors}.

Now consider the case of a discrete semigroup with an infinite
countable basis $X$.

\begin{example}[(Natural numbers with multiplication)]
Consider the semigroup of natural numbers with the multiplication
operation. Its basis $X$ is the family $\sP$ of prime numbers. A
random natural number $\xi$ corresponds to a point process on $\sP$;
for example, $\xi=1$ if this point process is empty. Otherwise $\xi$ is
the multiple of prime numbers raised to their multiplicities as
\[
\xi_\Phi=\prod_{p\in\sP} p^{\Phi(p)} .
\]
Then $t\circ\xi$ is obtained by independently taking out the prime
divisors of $\xi$ with probability $1-t$. Consequently, a class of
\textit{discrete multiplicatively stable distributions} can be defined
as the distributions of $G$-valued random variables $\xi$ satisfying
\[
 (t^{1/\alpha}\circ\xi' )\cdot \bigl((1-t)^{1/\alpha
}\circ
\xi'' \bigr) =\xi ,
\]
where, as before, $\xi',\ \xi''$ are independent copies of $\xi$.
Then $\xi=\xi_\Phi$ is multiplicatively $\alpha$-stable if the
corresponding process $\Phi$ on $\sP$ has \pgfl given
by (\ref{eq:pg-pp-spec}).

Extend the domain of the \pgfl to the class of monotone pointwise
limits of the functions $u$ such that $1-u\in\mathrm{BM}(\sP)$,
allowing for infinite values of the p.g.fl., and consider $h_s\dvtx
\sP\mapsto(0,1)$ such that $h(p)=p^{s}$ for some $s$. Then
\[
\E\xi_\Phi^{s}=\E\prod_{p\in\sP}
p^{s\Phi(p)}=G_\Phi[h_s]=\exp \biggl\{-\int_{\M_1}
 \biggl(1-\sum_{p\in\sP} p^{s}\mu_p \biggr)^\alpha
\sigma(\mathrm{d}\mu) \biggr\} ,
\]
where $\M_1$ is the set of probability distributions on $\sP$. The
expression above is finite at least for all $s<0$ and can be used to
numerically evaluate the distribution of $\xi_\Phi$ by the inverse
Mellin transform.

Now consider the case of independently scattered $\Phi$, where
$\sigma$ is concentrated on degenerated distributions (see
Theorem \ref{th:dasprops}(ii)) and thus can be identified
with a sequence $\{\sigma_p,  p\in\sP\}$. Then $\Phi$ is a sequence
indexed by $p\in\sP$ of doubly stochastic Poisson random variables
$\nu_{\zeta_p}$ with parameters $\zeta_p$, which are independent
positive $\sas$ random variables with the Laplace transforms $\E
\mathrm{e}^{-h_p\zeta_p}=\exp\{-h_p^\alpha\sigma_p\}$. Now the distribution
of $\xi_\Phi$ can be explicitly characterised: if
$n=\prod_{p\in\sP} p^{k_p}$, then
\[
\P\{\xi_\Phi=n\}=\P\{\nu_{\zeta_p}=k_p\ \forall
p\in\sP\}=\prod_{p\in\sP} \E \biggl[
\frac{\zeta_p^{k_p}}{k_p!}\mathrm{e}^{-\zeta_p} \biggr] .
\]
In particular, for any $q\in\sP$ we have that
\[
\P\{\xi_\Phi=q\}=\prod_{p\neq q} \P\{\nu_{\zeta_p}=0\}\E[\zeta_q
\mathrm{e}^{-\zeta_q}]=\mathrm{e}^{-\sigma(\sP)+\sigma_q} \E[\zeta_q
\mathrm{e}^{-\zeta_q}] .
\]
If $\alpha=1/2$, then the density of $\zeta_p$ is given by
\[
f_{\zeta_p}(t)=\frac{\sigma_p}{2\sqrt{\uppi}t^{3/2}}
\exp\{-\sigma_p/(4t)\},\qquad   t\geq0 ,
\]
leading to
\[
\P\{\xi_\Phi=q\}=\mathrm{e}^{-\sigma(\sP)+\sigma_q}
\tfrac{1}{2}\sigma_q
\mathrm{e}^{-\sigma_q}= \tfrac{1}{2}\sigma_q
\mathrm{e}^{-\sigma(\sP)},\qquad   q\in\sP .
\]

The above construction can be extended for an \textit{arithmetical
semigroup} generated by a countable subset $\sP=\{p_1,p_2,
\dots\}$ called the \textit{generalised primes} -- for example, Beurling's
generalised prime numbers with the so-called Delone property, which
implies the uniqueness of the factorisation; see \cite{BatDia69,DDSG76}.
\end{example}

\begin{example}
Consider the family $S$ of all finite Abelian groups with the
semigroup operation being the direct product. The main theorem on
Abelian groups states that each such group can be uniquely
decomposed into the direct product of cyclic groups with orders
being prime numbers and their natural powers; see, for example, \cite
{EliEli85}, Theorem 7.2. Thus, the basis $X$ is the family of
prime numbers and all powers of prime numbers. The multiplication
of a cyclic group of order $p$ by real number $t\in(0,1]$ is defined
as a~random group where each factor from its decomposition is
eliminated with probability $1-t$. A spectral measure defined on the
set $\M_1$ of probability distributions on $X$ then determines the
distribution of a random stable finite group.
\end{example}

\section*{Acknowledgements}
\label{sec:acknowledgements}

The authors are grateful to two anonymous referees for their valuable
comments. I.~Molchanov was supported by the Swiss National Science Foundation,
Grant Nr. 200021-117606. S. Zuyev thanks Eddie McKenzie for introducing
him to discrete stability for positive integer random variables. I. Molchanov is
grateful to Zakhar Kabluchko for useful discussions.

\printhistory


\begin{thebibliography}{00}

\bibitem{aragin}
\begin{bbook}[author]
\bauthor{\bsnm{Araujo}, \bfnm{A.}\binits{A.}} \AND
 \bauthor{\bsnm{Gin\'e}, \bfnm{E.}\binits{E.}}
(\byear{1980}).
\btitle{The Central Limit Theorem for Real and Banach Valued Random
Variables}.
\baddress{New York}: \bpublisher{Wiley}.
\end{bbook}
\MR{0576407}
\endbibitem

\bibitem{BatDia69}
\begin{bincollection}[author]
\bauthor{\bsnm{Bateman}, \bfnm{P.T.}\binits{P.T.}} \AND
 \bauthor{\bsnm{Diamond}, \bfnm{H.}\binits{H.}}
(\byear{1969}).
\btitle{Asymptotic distribution of {B}eurling's generalised prime
numbers}.
In \bbooktitle{Studies in Number Theory}
(\beditor{\bfnm{W.}\binits{W.} \bsnm{LeVeque}}, ed.).
\bseries{MAA Studies in Math.}
\bvolume{6}.
\baddress{Englewood Cliffs, NJ}: \bpublisher{Prentice-Hall}.
\end{bincollection}
\MR{0240034}
\endbibitem

\bibitem{BergChriRes84}
\begin{bbook}[author]
\bauthor{\bsnm{Berg}, \bfnm{C.}\binits{C.}},
 \bauthor{\bsnm{Christensen}, \bfnm{J.P.R.}\binits{J.P.R.}} \AND
 \bauthor{\bsnm{Ressel}, \bfnm{P.}\binits{P.}}
(\byear{1984}).
\btitle{Harmonic Analysis on Semigroups}.
\baddress{Berlin}: \bpublisher{Springer}.
\end{bbook}
\MR{0747302}
\endbibitem

\bibitem{chrsch98}
\begin{barticle}[author]
\bauthor{\bsnm{Christoph}, \bfnm{G.}\binits{G.}} \AND
 \bauthor{\bsnm{Schreiber}, \bfnm{K.}\binits{K.}}
(\byear{1998}).
\btitle{Discrete stable random variables}.
\bjournal{Statist. Probab. Lett.}
\bvolume{37}
\bpages{243--247}.
\end{barticle}
\MR{1614930}
\endbibitem

\bibitem{DalVJon02}
\begin{bbook}[author]
\bauthor{\bsnm{Daley}, \bfnm{D.J.}\binits{D.J.}} \AND
 \bauthor{\bsnm{Vere-Jones}, \bfnm{D.}\binits{D.}}
(\byear{2003}).
\btitle{An Introduction to the Theory of Point Processes. Volume {I}:
 Elementary Theory and Methods}, \bedition{2nd} ed.
\baddress{New York}: \bpublisher{Springer}.
\end{bbook}
\MR{1950431}
\endbibitem

\bibitem{DalVJon08}
\begin{bbook}[author]
\bauthor{\bsnm{Daley}, \bfnm{D.J.}\binits{D.J.}} \AND
 \bauthor{\bsnm{Vere-Jones}, \bfnm{D.}\binits{D.}}
(\byear{2008}).
\btitle{An Introduction to the Theory of Point Processes. Volume {II}:
General
 Theory and Structure}, \bedition{2nd} ed.
 \baddress{New York}: \bpublisher{Springer}.
\end{bbook}
\MR{2371524}
\endbibitem

\bibitem{davegor00}
\begin{bincollection}[author]
\bauthor{\bsnm{Davydov}, \bfnm{Yu.}\binits{Y.}} \AND
 \bauthor{\bsnm{Egorov}, \bfnm{V.A.}\binits{V.A.}}
(\byear{2000}).
\btitle{Functional limit theorems for induced order statistics of a
sample from
 a domain of attraction of $\alpha$-stable law, {$0< \alpha<2$}}.
In \bbooktitle{Asymptotics in Statistics and Probability: Papers in
Honor of
 George Gregory Roussas}
(\beditor{\bfnm{Madan Lal}\binits{M.L.}~\bsnm{Puri}}, ed.)
\bpages{85--116}. \baddress{Utrecht}: \bpublisher{VSP}.
\end{bincollection}
\MR{1807096}
\endbibitem

\bibitem{davmolzuy08}
\begin{barticle}[author]
\bauthor{\bsnm{Davydov}, \bfnm{Y.}\binits{Y.}},
 \bauthor{\bsnm{Molchanov}, \bfnm{I.}\binits{I.}} \AND
 \bauthor{\bsnm{Zuyev}, \bfnm{S.}\binits{S.}}
(\byear{2008}).
\btitle{Strictly stable distributions on convex cones}.
\bjournal{Electron. J. Probab.}
\bvolume{13}
\bpages{259--321}.
\end{barticle}
\MR{2386734}
\endbibitem

\bibitem{DDSG76}
\begin{barticle}[author]
\bauthor{\bsnm{Delone}, \bfnm{B.N.}\binits{B.N.}},
 \bauthor{\bsnm{Dolbilin}, \bfnm{N.P.}\binits{N.P.}},
 \bauthor{\bsnm{Shtogrin}, \bfnm{M.I.}\binits{M.I.}} \AND
 \bauthor{\bsnm{Galiulin}, \bfnm{R.V.}\binits{R.V.}}
(\byear{1976}).
\btitle{A local criterion for regularity of a system of points}.
\bjournal{Sov. Math. Dokl.}
\bvolume{17}
\bpages{319--322}.
\end{barticle}
\endbibitem

\bibitem{Dev92}
\begin{barticle}[author]
\bauthor{\bsnm{Devroye}, \bfnm{L.}\binits{L.}}
(\byear{1992}).
\btitle{Random variate generation for digamma and trigamma
distributions}.
\bjournal{J. Statist. Comput. Simul.}
\bvolume{43}
\bpages{197--216}.
\end{barticle}
\MR{1389440}
\endbibitem

\bibitem{Dev93}
\begin{barticle}[author]
\bauthor{\bsnm{Devroye}, \bfnm{L.}\binits{L.}}
(\byear{1993}).
\btitle{A triptych of discrete distributions related to the stable
law}.
\bjournal{Statist. Probab. Lett.}
\bvolume{18}
\bpages{349--351}.
\end{barticle}
\MR{1247445}
\endbibitem

\bibitem{EliEli85}
\begin{bbook}[author]
\bauthor{\bsnm{Ellison}, \bfnm{W.}\binits{W.}} \AND
 \bauthor{\bsnm{Ellison}, \bfnm{F.}\binits{F.}}
(\byear{1985}).
\btitle{Prime Numbers}.
\bpublisher{New York: Wiley}.
\end{bbook}
\MR{0814687}
\endbibitem

\bibitem{harsteutver82}
\begin{barticle}[author]
\bauthor{\bsnm{Van Harn}, \bfnm{K.}\binits{K.}},
 \bauthor{\bsnm{Steutel}, \bfnm{F.W.}\binits{F.W.}} \AND
 \bauthor{\bsnm{Vervaat}, \bfnm{W.}\binits{W.}}
(\byear{1982}).
\btitle{Self-decomposable discrete distributions and branching
processes}.
\bjournal{Z. Wahrsch. Verw. Gebiete}
\bvolume{61}
\bpages{97--118}.
\end{barticle}
\MR{0671246}
\endbibitem

\bibitem{harsteut86}
\begin{barticle}[author]
\bauthor{\bsnm{Van Harn}, \bfnm{K.}\binits{K.}} \AND
 \bauthor{\bsnm{Steutel}, \bfnm{F.W.}\binits{F.W.}}
(\byear{1986}).
\btitle{Discrete operator-selfdecomposability and queueing networks}.
\bjournal{Stoch. Models}
\bvolume{2}
\bpages{161--169}.
\end{barticle}
\MR{0854151}
\endbibitem

\bibitem{harsteut93}
\begin{barticle}[author]
\bauthor{\bsnm{Van Harn}, \bfnm{K.}\binits{K.}} \AND
 \bauthor{\bsnm{Steutel}, \bfnm{F.W.}\binits{F.W.}}
(\byear{1993}).
\btitle{Stability equations for processes with stationary increments
using
 branching processes and {Poisson} mixtures}.
\bjournal{Stochastic Process. Appl.}
\bvolume{45}
\bpages{209--230}.
\end{barticle}
\MR{1208869}
\endbibitem

\bibitem{hell09}
\begin{barticle}[author]
\bauthor{\bsnm{Hellmund}, \bfnm{G.}\binits{G.}}
(\byear{2009}).
\btitle{Completely random signed measures}.
\bjournal{Statist. Probab. Lett.}
\bvolume{79}
\bpages{894--898}.
\end{barticle}
\MR{2509480}
\endbibitem

\bibitem{helprokved08}
\begin{barticle}[author]
\bauthor{\bsnm{Hellmund}, \bfnm{G.}\binits{G.}},
 \bauthor{\bsnm{Proke{\v{s}}ov{\'a}}, \bfnm{M.}\binits{M.}} \AND
 \bauthor{\bsnm{Vedel Jensen}, \bfnm{E.B.}\binits{E.B.}}
(\byear{2008}).
\btitle{L{\'e}vy-based {Cox} processes}.
\bjournal{Adv. in Appl. Probab.}
\bvolume{40}
\bpages{603--629}.
\end{barticle}
\MR{2454025}
\endbibitem

\bibitem{Kal83}
\begin{bbook}[author]
\bauthor{\bsnm{Kallenberg}, \bfnm{O.}\binits{O.}}
(\byear{1983}).
\btitle{Random Measures}.
\bpublisher{New York: Academic Press}.
\end{bbook}
\MR{0818219}
\endbibitem

\bibitem{mkm}
\begin{bbook}[author]
\bauthor{\bsnm{Matthes}, \bfnm{K.}\binits{K.}},
 \bauthor{\bsnm{Kerstan}, \bfnm{J.}\binits{J.}} \AND
 \bauthor{\bsnm{Mecke}, \bfnm{J.}\binits{J.}}
(\byear{1978}).
\btitle{Infinitely Divisible Point Processes}.
\baddress{Chichester}: \bpublisher{Wiley}.
\end{bbook}
\MR{0517931}
\endbibitem

\bibitem{mo07}
\begin{btechreport}[author]
\bauthor{\bsnm{Molchanov}, \bfnm{I.}\binits{I.}}
(\byear{2007}).
\btitle{Convex and star shaped sets associated with multivariate stable
 distributions.}
\btype{Technical report}.
Available at \url{http://arxiv.org/abs/0707.0221}.
\end{btechreport}
\endbibitem

\bibitem{mol03}
\begin{barticle}[author]
\bauthor{\bsnm{M{\o}ller}, \bfnm{J.}\binits{J.}}
(\byear{2003}).
\btitle{Shot noise {Cox} processes}.
\bjournal{Adv. in Appl. Probab.}
\bvolume{35}
\bpages{614--640}.
\end{barticle}
\MR{1990607}
\endbibitem

\bibitem{molsyvwaa98}
\begin{barticle}[author]
\bauthor{\bsnm{M{\o}ller}, \bfnm{J.}\binits{J.}},
 \bauthor{\bsnm{Syversveen}, \bfnm{A.R.}\binits{A.R.}} \AND
 \bauthor{\bsnm{Waagepetersen}, \bfnm{R.P.}\binits{R.P.}}
(\byear{1998}).
\btitle{Log {Gaussian} {Cox} processes}.
\bjournal{Scand. J. Statist.}
\bvolume{25}
\bpages{451--482}.
\end{barticle}
\MR{1650019}
\endbibitem

\bibitem{pak95}
\begin{barticle}[author]
\bauthor{\bsnm{Pakes}, \bfnm{A.G.}\binits{A.G.}}
(\byear{1995}).
\btitle{Characterization of discrete stable laws via mixed sums and
Markov
 branching processes}.
\bjournal{Stochastic Process. Appl.}
\bvolume{55}
\bpages{285--300}.
\end{barticle}
\MR{1313024}
\endbibitem

\bibitem{Renyi57}
\begin{barticle}[author]
\bauthor{\bsnm{R\'{e}nyi}, \bfnm{A.}\binits{A.}}
(\byear{1957}).
\btitle{A characterization of {Poisson} processes}.
\bjournal{Magyar Tud. Akad. Mat. Kutat\'{o} Int. K\"{o}zl}
\bvolume{1}
\bpages{519--527}.
\end{barticle}
\MR{0094861}
\endbibitem

\bibitem{res87}
\begin{bbook}[author]
\bauthor{\bsnm{Resnick}, \bfnm{S.I.}\binits{S.I.}}
(\byear{1987}).
\btitle{Extreme Values, Regular Variation and Point Processes}.
\baddress{Berlin}: \bpublisher{Springer}.
\end{bbook}
\MR{0900810}
\endbibitem

\bibitem{rus88a}
\begin{barticle}[author]
\bauthor{\bsnm{Rusza}, \bfnm{I.Z.}\binits{I.Z.}}
(\byear{1988}).
\btitle{Infinite divisibility}.
\bjournal{Adv. Math.}
\bvolume{69}
\bpages{115--132}.
\end{barticle}
\MR{0937320}
\endbibitem

\bibitem{rus88b}
\begin{barticle}[author]
\bauthor{\bsnm{Rusza}, \bfnm{I.Z.}\binits{I.Z.}}
(\byear{1988}).
\btitle{Infinite divisibility {II}}.
\bjournal{J. Theoret. Probab.}
\bvolume{1}
\bpages{327--339}.
\end{barticle}
\MR{0958241}
\endbibitem

\bibitem{SamTaq94}
\begin{bbook}[author]
\bauthor{\bsnm{Samorodnitsky}, \bfnm{G.}\binits{G.}} \AND
 \bauthor{\bsnm{Taqqu}, \bfnm{M.S.}\binits{M.S.}}
(\byear{1994}).
\btitle{Stable non-{Gaussian} Random Processes: Stochastic Models with
Infinite
 Variance}.
\bpublisher{New York: Chapman and Hall}.
\end{bbook}
\MR{1280932}
\endbibitem

\bibitem{stehar79}
\begin{barticle}[author]
\bauthor{\bsnm{Steutel}, \bfnm{F.W.}\binits{F.W.}} \AND
 \bauthor{\bsnm{Van {H}arn}, \bfnm{K.}\binits{K.}}
(\byear{1979}).
\btitle{Discrete analogues of self-decomposability and stability}.
\bjournal{Ann. Probab.}
\bvolume{7}
\bpages{893--899}.
\end{barticle}
\MR{0542141}
\endbibitem

\bibitem{skm}
\begin{bbook}[author]
\bauthor{\bsnm{Stoyan}, \bfnm{D.}\binits{D.}},
 \bauthor{\bsnm{Kendall}, \bfnm{W.S.}\binits{W.S.}} \AND
 \bauthor{\bsnm{Mecke}, \bfnm{J.}\binits{J.}}
(\byear{1995}).
\btitle{Stochastic Geometry and Its Applications}, \bedition{2nd} ed.
\baddress{Chichester}: \bpublisher{Wiley}.
\end{bbook}
\endbibitem

\bibitem{VJ05}
\begin{barticle}[author]
\bauthor{\bsnm{Vere-{J}ones}, \bfnm{D}\binits{D.}}
(\byear{2005}).
\btitle{A class of self-similar random measures}.
\bjournal{Adv. in Appl. Probab.}
\bvolume{37}
\bpages{908--914}.
\end{barticle}
\MR{2193989}
\endbibitem

\end{thebibliography}
\end{document}